\documentclass[12pt,reqno]{amsart}

%\usepackage[color]{showkeys}     % refs and labels
%\definecolor{refkey}{gray}{.5}   % graylevel for refs
%\definecolor{labelkey}{gray}{.5} % graylevel for labels

\headheight=6.15pt
\textheight=8.75in
\textwidth=6.5in
\oddsidemargin=0in
\evensidemargin=0in
\topmargin=0in

\usepackage{epsfig}
\usepackage{amsmath, amsthm, amsfonts}
\usepackage{graphicx}

\numberwithin{equation}{section}

\newcommand{\lltwos}{\ell^2({\mathbb S})}
\newcommand{\wlltwon}{\ell_{a}^2({\mathbb N})}
\newcommand{\wlltwonp}{\ell_{a'}^2({\mathbb N})}
\newcommand{\wlltwoz}{\ell_{a}^2({\mathbb Z})}
\newcommand{\wlltwozp}{\ell_{a'}^2({\mathbb Z})}
\newcommand{\wlltwos}{\ell_{a}^2({\mathbb S})}
\newcommand{\wlltwosp}{\ell_{a'}^2({\mathbb S})}

\newcommand{\wlltwosc}{\ell_{a^{(n)}}^2({\mathbb S})}
\newcommand{\di}{\displaystyle}

\newcommand{\D}{{\mathbb D}}
\newcommand{\A}{{\mathbb A}}

\newcommand{\C}{{\mathbb C}}
\newcommand{\N}{{\mathbb N}}

\newcommand{\Z}{{\mathbb Z}}

\renewcommand{\d}{\partial}

\newcommand{\f}{\varphi}

\newtheorem{theo}{{\sc \bf Theorem}}[section]
\newtheorem{cor}[theo]{{\sc \bf Corollary}}
\newtheorem{lem}[theo]{{\sc \bf Lemma}}
\newtheorem{prop}[theo]{{\sc \bf Proposition}}

\newenvironment{defin}{\medskip\noindent{\it Definition:\/} }{\medskip}

\begin{document}

\title{D-bar Operators on Quantum Domains}

\author{Slawomir Klimek}
\address{Department of Mathematical Sciences,
Indiana University-Purdue University Indianapolis,
402 N. Blackford St., Indianapolis, IN 46202, U.S.A.}
\email{sklimek@math.iupui.edu}

\author{Matt McBride}
\address{Department of Mathematical Sciences,
Indiana University-Purdue University Indianapolis,
402 N. Blackford St., Indianapolis, IN 46202, U.S.A.}
\email{mmcbride@math.iupui.edu}

\thanks{}

\date{December 30, 2009}

\begin{abstract}
We study the index problem for the d-bar operators subject to Atiyah- 
Patodi-Singer boundary conditions on noncommutative disk and annulus.
\end{abstract}

\maketitle
\section{Introduction}

It this paper we consider noncommutative analogs of the d-bar operator  
on simple complex plane domains with boundary: disk and annulus. In both cases
the corresponding quantum domain, its boundary, a d-bar operator, and an analog of the $L^2$ Hilbert space of functions on the domain is constructed using a weighted shift, subject to suitable assumptions. The weighted shift plays the role of the complex coordinate $z$. 

For such d-bar operators we consider boundary conditions of Atiyah,  
Patodi, Singer (APS) type \cite{APS}. This can be done so that both the commutative and the noncommutative setup appear in close analogy. The main result of the paper is that of the quantum d-bar operators subject
to APS conditions are unbounded Fredholm operators. Additionally we  
compute their index.

Let us recall that an unbounded operator $D$ is called a Fredholm operator 
if $D$ is closed, has closed range, and finite dimensional kernel and cokernel. Equivalently, see \cite{S}, a closed opearotor $D$ is Fredholm if it has a bounded parametrix $Q$ such that both $QD-I$ and $DQ-I$ are compact. The technical part of the paper consist of finding such a parametrix.

The celebrated APS boundary condition was introduced in \cite{APS} to handle the index theory for geometrical operators on manifolds with boundary when usual local boundary conditions were not available. Because it is non-local, the APS condition seems to be naturally suited to consider in noncommutative geometry. A more general class of APS-type boundary conditions was described in \cite{BBW}. Here we consider only simple APS-type boundary conditions given by spectral projections.

This paper is a continuation and an extension of \cite{CKW}, which  
considered APS theory on the noncommutative unit disk. Here we present somewhat different and more detailed treatment of the disk case as well as a similar theory on the cylinder. In particular the modifications we consider here yield a compact parametrix for the d-bar operators, which was not the case in \cite{CKW}. The present paper will be followed by a separate note containing a construction of a parametrix for the quantum d-bar operator on the semi-infinite cylinder i.e. a punctured disk.

Noncommutative domains considered in this paper were previously discussed in  \cite{KL1, KL3}. Other papers that studied d-bar operator in similar situations (but not the APS boundary conditions) are: \cite{BKLR}, \cite{K}, \cite{RW}, \cite{SSV1}-\cite{SSV5}. A related study of an example of APS boundary conditions in the context of noncommutative geometry is contained in \cite{CPR}, another one is in \cite{L}. 

The ideas in this paper can be further extended in several directions. The present setup fits into deformation-quantization scheme and so it will be desirable to consider classical limit of the quantum d-bar operators. Other, different, possibly higher dimensional examples should also be constructed. Because of the compact parametrix, the d-bar operators of this paper can be used to define Fredholm modules over quantum domains (with boundary), which will be interesting to explore. While the computation of the index in the present work is fairly straightforward, it is a challenging question to find a noncommutative framework for such calculations in general.

The paper is organized as follows.
In the preliminary Section 2 we describe the classical d-bar operators  
on domains in complex plane subject to APS-type boundary conditions and compute their index.
Section 3 contains the main constructions of the paper: quantum disk, quantum annulus, Hilbert spaces, d-bar operators, APS-type boundary conditions. The  
main results are also stated in this section.
Section 4 is the longest of the paper. It contains detailed analysis  
of some finite difference operators in weighted $\ell^2$ spaces. The operators are essentially unbounded Jacobi operators, see \cite{T}. That analysis constitutes the technical backbone of the paper.
Section 5 introduces noncommutative Fourier transform on our quantum  
domains. The Fourier transform essentially diagonalizes the d-bar operators and thus reduces their analysis to the analysis of the difference operators of
the previous section. Finally, Section 6 describes proofs of the main  
results.

\section{The d-bar operator on domains in the complex plane}

It this section we review the basic aspects of the APS theory for the d-bar operator on simple domains in the complex plane $\C$. 
We start by introducing some notation. The first domain is the  disk:

\begin{equation*}
\begin{aligned}
&\D = \di\{z\in \C : \ |z| \le \rho_+\} \\
&\d\D = \di\{z\in \C : \ |z| = \rho_+\}\simeq S^1 .
\end{aligned}
\end{equation*}
The second domain is an annulus in the complex plane $\C$:

\begin{equation*}
\begin{aligned}
&\A_{\rho_{-},\rho_{+}} = \di\{z\in \C : \ 0<\rho_{-} \le |z| \le \rho_{+} \} \\
&\d\A_{\rho_{-},\rho_{+}} = \di\{z\in \C : \ |z| = \rho_{\pm} \} \simeq S^1 \cup S^1 ,
\end{aligned}
\end{equation*}
which can also be viewed as a finite cylinder. 

For each of those domains we will consider the d-bar operator:

\begin{equation*}
D = \frac{\d}{\d\overline{z}}
\end{equation*}
defined on the space of smooth functions. 

First we will concentrate on the unit disk. In this case we have the short exact sequence:

\begin{equation}\label{seq1}
0\longrightarrow C_0^\infty(\D) \longrightarrow C^\infty(\D) \overset{r}{\longrightarrow} C^\infty(\d\D) \longrightarrow 0
\end{equation}
where $r: \ C^\infty(\D) \to C^\infty(\d\D)$ is the restriction map to the boundary, 
$rf(\varphi) = f(1\cdot e^{i\varphi})$.
Here $C_0^\infty(\D)$ is the space of smooth functions on $\D$ vanishing at the boundary and $z\in\D$ has polar representation $z = \rho e^{i\varphi}$.   

Now we consider the APS-like boundary conditions on $D$. Notice that the APS theory cannot be applied directly in this case since the operator $D$ does not quite decompose into tangential (boundary) and transverse parts near boundary. However this is only a minor technical annoyance, and it is clear that $-i\d/\d\varphi$ is the correct boundary operator. The APS-type boundary conditions considered in this paper are given in terms of the spectral projections of the boundary operator 
$-i\d/\d\varphi$ as follows. Let $\pi_A\,(I)$ be the spectral projection of a self-adjoint operator $A$ onto interval $I$.
For an integer $N$ we introduce $P_N$:
\begin{equation}\label{proj1}
P_N = \pi_{\frac{1}{i}\frac{\d}{\d\f}}(-\infty,N] .
\end{equation}
In other words $P_N$ is the orthogonal projection in $L^2(S^1)$ onto 
span$\{e^{in\varphi}\}_{n\le N}$.   

The main object of the APS theory is the operator $D_N$ defined to be the operator $D$ with the domain:

\begin{equation*}
\textrm{dom}(D_N) = \{f\in C^\infty(\D)\subset L^2(\D) \ : \ rf\in \textrm{Ran } P_N \}.
\end{equation*}

We have the following theorem, see \cite{CKW} for details.

\begin{theo}
The closure of the operator $D_N$ is an unbounded Fredholm operator in $L^2(\D)$ and it has the following index: $Index(D_N) = N+1.$

\end{theo}

Now we will discuss the annulus. While we skip some functional analytic details, we show the index calculation in a similar fashion to what was done in 
\cite{CKW} in the disk case.

If one lets $r_\pm$ be the restriction to the boundary map i.e. $r_\pm f(\varphi) = f(\rho_\pm e^{i\varphi})$, then one has the short exact sequence:

\begin{equation}\label{seq2}
0\longrightarrow C_0^{\infty}(\A_{\rho_{-},\rho_{+}}) \longrightarrow C^{\infty}(\A_{\rho_{-},\rho_{+}}) \overset{r=r_+ \oplus r_-}{\longrightarrow} C^{\infty}(S^1) \oplus C^{\infty}(S^1) \longrightarrow 0
\end{equation}
where $C_0^{\infty}(\A_{\rho_{-},\rho_{+}})$ is the space of smooth functions on $\A_{\rho_{-},\rho_{+}}$ which are zero on the boundary.
 
The key to index calculation of the d-bar operator is the following proposition. In what follows we use the usual inner product on $L^2(\A_{\rho_{-},\rho_{+}})$:

\begin{equation*}
\di\langle f, g \di\rangle = \int_{\A_{\rho_{-},\rho_{+}}}\overline{f(z)}g(z)\frac{dz\wedge d\overline{z}}{-2i\pi}.
\end{equation*}

\begin{prop}\label{intro2}
Let $D$ be the operator

\begin{equation*}
D = \frac{\d}{\d\overline{z}}
\end{equation*}
on $C^\infty(\A_{\rho_{-},\rho_{+}})$.   Then the kernel of $D$ is
the set of bounded holomorphic functions on $\A_{\rho_{-},\rho_{+}}$. Moreover

\begin{equation*}
\di\langle Df, g \di\rangle = \di\langle f, \overline{D}g \di\rangle + \int_0^{2\pi}\overline{r_+f(\f)}r_+g(\f)\rho_+e^{-i\f}\frac{d\f}{2\pi} - \int_0^{2\pi}\overline{r_-f(\f)}r_-g(\f)\rho_-e^{-i\f}\frac{d\f}{2\pi}
\end{equation*}
where $f,g\in C^\infty(\A_{\rho_{-},\rho_{+}})$ and

\begin{equation*}
\overline{D} = -\frac{\d}{\d z} .
\end{equation*}
\end{prop}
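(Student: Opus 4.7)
The plan is to treat the two assertions separately; neither requires machinery beyond the Cauchy--Riemann equations and Stokes' theorem.

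For the kernel, the equation $Df = 0$ is literally $\d f/\d\overline{z}=0$, so any $f\in C^\infty(\A_{\rho_{-},\rho_{+}})$ in the kernel is holomorphic on the interior of the annulus; since the closed annulus is compact and $f$ is continuous there, $f$ is automatically bounded. The converse inclusion is just as direct: a bounded holomorphic function on $\A_{\rho_{-},\rho_{+}}$ is smooth and plainly lies in $\ker D$. This part is essentially an unraveling of definitions.

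For the Green-type identity, my plan is to recast the difference $\langle Df, g\rangle - \langle f, \overline{D}g\rangle$ as the integral of an exact form and then apply Stokes' theorem. The elementary identity $\d_z\bar f = \overline{\d_{\bar z}f}$ combined with the Leibniz rule gives
\begin{equation*}
\overline{\d_{\bar z}f}\cdot g + \bar f\cdot\d_z g = \d_z(\bar f g),
\end{equation*}
so the difference equals $\int_{\A_{\rho_{-},\rho_{+}}}\d_z(\bar f g)\,\frac{dz\wedge d\bar z}{-2i\pi}$. Because $d\bar z\wedge d\bar z=0$, the integrand agrees with the exterior derivative $d(\bar f g\,d\bar z)$, and Stokes' theorem reduces the computation to a boundary integral of $\bar f g\,d\bar z/(-2i\pi)$ over $\d\A_{\rho_{-},\rho_{+}}$.

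The one step that takes real care is the orientation bookkeeping on the two boundary circles. With the standard orientation of $\A_{\rho_{-},\rho_{+}}$, the outer circle $|z|=\rho_{+}$ is traversed counterclockwise while the inner circle $|z|=\rho_{-}$ is traversed clockwise. Parametrizing each by $z=\rho_{\pm}e^{i\f}$, $\f\in[0,2\pi]$, yields $d\bar z = -i\rho_{\pm}e^{-i\f}\,d\f$; after dividing by $-2i\pi$ the outer boundary produces the $+$ term in the proposition, while the clockwise inner boundary produces the analogous expression with an overall minus sign. Sorting out this sign is the main (and really only) obstacle; everything else is direct.
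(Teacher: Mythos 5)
Your argument is correct and is essentially the paper's own proof: the kernel statement is a direct unraveling of the Cauchy--Riemann equation (with boundedness automatic by compactness of the closed annulus), and the integration-by-parts identity is obtained, as in the paper, from Stokes' theorem applied to $\bar f g\,d\bar z$, with the sign discrepancy between the two boundary terms coming exactly from the opposite induced orientations of the outer and inner circles. Your version simply spells out the details the paper leaves implicit.
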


\begin{proof}
The first conclusion is clear. The integration by parts formula follows immediately from Stokes' Theorem. \end{proof}

In order to define APS-type boundary conditions here we take extra caution since the boundary has two components. Let $P_N^\pm$ be the spectral projections in $L^2(S^1)$ of the boundary operators
${\pm\frac{1}{i}\frac{\d}{\d\f}}$ onto interval $(-\infty,N]$ i.e.:

\begin{equation}\label{proj2}
P_N^{\pm} = \pi_{\pm\frac{1}{i}\frac{\d}{\d\f}}\,(-\infty,N]
\end{equation}
where $\pm$ is introduced due to the boundary orientations of the inner circle and outer circle.   Then, for integers $M$, $N$, we define the operator 
$D_{M,N}$ to be equal to $D$ with domain

\begin{equation*}
\textrm{dom}(D_{M,N}) = \{ f\in C^{\infty}(\A_{\rho_{-},\rho_{+}}) \ : \ r_+ f \in \textrm{Ran} \ P_M^+ , \ r_- f \in \textrm{Ran} \ P_N^- \}.
\end{equation*}
An immediate corollary of this definition is the description of the kernel of $D_{M,N}$.

\begin{cor}
Let $D_{M,N}$ be as defined above, then

\begin{equation*}
\textrm{Ker}(D_{M,N}) = \left\{
\begin{array}{cc}
\left\{f \ : \ f(z) = \sum_{n=-N}^M c_n z^n\right\} & \textrm{if}\ N+M\ge 0 \\
0 & \textrm{otherwise.}
\end{array}\right.
\end{equation*}
\end{cor}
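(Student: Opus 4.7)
The plan is to combine Proposition \ref{intro2} with an explicit Fourier description of the APS projections. By that proposition, any $f\in\textrm{Ker}(D_{M,N})$ is a bounded holomorphic function on $\A_{\rho_{-},\rho_{+}}$, hence admits a Laurent expansion $f(z) = \sum_{n\in\Z} c_n z^n$ converging on the closed annulus. Writing $z = \rho_\pm e^{i\f}$, the restrictions to the two boundary circles become
\begin{equation*}
r_\pm f(\f) = \sum_{n\in\Z} c_n \rho_\pm^n e^{in\f},
\end{equation*}
so the problem reduces to a direct analysis of Fourier coefficients.

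Next I would identify the ranges of the spectral projections. Since $e^{in\f}$ is an eigenfunction of $\frac{1}{i}\frac{\d}{\d\f}$ with eigenvalue $n$, the projection $P_M^+$ onto $(-\infty,M]$ in the spectrum is precisely the orthogonal projection in $L^2(S^1)$ onto $\textrm{span}\{e^{in\f}: n\le M\}$. The sign in $P_N^- = \pi_{-\frac{1}{i}\frac{\d}{\d\f}}(-\infty,N]$, which reflects the opposite orientation of the inner boundary circle, flips the inequality, so $P_N^-$ is the orthogonal projection onto $\textrm{span}\{e^{in\f}: n\ge -N\}$.

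Imposing the two APS conditions then amounts to requiring $c_n\rho_+^n = 0$ for $n>M$ and $c_n\rho_-^n = 0$ for $n<-N$. Since $\rho_\pm>0$, both conditions say precisely that $c_n=0$ outside the range $-N\le n\le M$. If $N+M\ge 0$ this range is nonempty and $f(z)=\sum_{n=-N}^M c_n z^n$; if $N+M<0$ no $n$ satisfies both inequalities and $f\equiv 0$. Conversely, any finite Laurent polynomial supported on $\{-N,\dots,M\}$ is holomorphic on all of $\C\setminus\{0\}$ and its boundary restrictions lie in the required subspaces, so the inclusion is actually an equality.

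There is no real obstacle here: once the boundary conditions are unpacked as constraints on Fourier coefficients, the result is immediate. The only subtle point worth flagging is the sign convention in $P_N^-$, which is easy to misread and which is exactly what makes the kernel symmetric between the two boundary components rather than favoring one side.
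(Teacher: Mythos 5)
Your proposal is correct and follows essentially the same route the paper has in mind: the paper treats the corollary as immediate from Proposition \ref{intro2} together with the Fourier-mode description of $\textrm{Ran}\,P_M^+$ and $\textrm{Ran}\,P_N^-$, which is exactly the Laurent-coefficient counting you carry out (and which reappears in the paper's index computation as $\#\{n : -N\le n\le M\}$). Your handling of the sign flip in $P_N^-$ matches the paper's convention, so there is nothing to add.
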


It follows from proposition (\ref{intro2}) that the adjoint of $D_{M,N}$, is (the closure of) the operator $\overline{D}_{M,N}$ which is equal to $\overline{D}$ but with the following domain

\begin{equation*}
\textrm{dom}(\overline{D}_{M,N}) = \{ f\in C^{\infty}(\A_{\rho_{-},\rho_{+}}) \ : \ e^{-i\f}r_+ f \in \textrm{Ker} \ P_M^+, \ e^{-i\f}  r_-f \in \textrm{Ker} \ P_N^- \} .
\end{equation*}
Moreover, one has the following description of the kernel of $\overline{D}_{M,N}$

\begin{equation*}
\textrm{Ker}(D_{M,N}) = \left\{
\begin{array}{cc}
\left\{f \ : \ f(z) = \sum_{n=N}^{-(M+2)} c_n z^n\right\} &\textrm{if}\ N+M< 0 \\
0 & \textrm{otherwise.}
\end{array}\right.
\end{equation*}

The following theorem is the corresponding index theorem for the commutative cylinder.

\begin{theo}\label{intro3}
The closure of the operator $D_{M,N}$ is an unbounded Fredholm operator.  Its index is given by:
$Index(D_{M,N}) = M+N+1$.

\end{theo}

\begin{proof}
To show the Fredholm property one follows \cite{APS}.  If $f\in C^\infty(\A_{\rho_{-},\rho_{+}})$ then $f(z)$ has the following Fourier representation:

\begin{equation*}
f(z) = \di\sum_{n\in\Z} f_n(\rho)e^{in\varphi}  .
\end{equation*}
This Fourier representation is exactly the spectral decomposition of \cite{APS} using the eigenvectors of the boundary operators $\pm i\d/\d\varphi$. In the Fourier transform the operator $D$ decomposes into sum of ordinary differential operators which allows for explicit calculation of a parametrix just like in \cite{APS}.

The index computation is as follows.  
We have:

\begin{equation*}
\begin{aligned}
\textrm{dim Ker}(D_{M,N}) &= \#\{n \ | \ -N\le n\le M \} \\
&=\left\{\begin{array}{cc}
0 & \textrm{if} \ M+N <0 \\
M+N+1 & \textrm{if} \ M+N\ge 0.
\end{array}\right.
\end{aligned}
\end{equation*}
In a similar fashion

\begin{equation*}
\begin{aligned}
\textrm{dim Ker}(D_{M,N}^*) &= \#\{n \ | \ N\le n\le -(M+2) \} \\
&=\left\{\begin{array}{cc}
-(M+N+1) & \textrm{if} \ N <0 \\
0 & \textrm{if} \ N\ge 0.
\end{array}\right.
\end{aligned}
\end{equation*}
Consequently

\begin{equation*}
\textrm{Index}(D_{M,N}) = \textrm{dim Ker}(D_{M,N}) - \textrm{dim Ker}(D_{M,N}^*)=M+N+1.
\end{equation*}

\end{proof}

We now turn our attention to the d-bar operator in the quantum domains.

\section{The d-bar operator on the non-commutative domains}

In this section we define the main objects of this paper: quantum disk, quantum annulus, Hilbert spaces of $L^2$ ``functions", and d-bar operators. The  
main results are also stated at the end of this section. 

In the following definitions we let $\mathbb{S}$ be either $\N$ or $\Z$. The main input of the theory is a weighted shift $U_W$ in $\lltwos$. Conceptually,
$U_W$ is a noncommutative complex coordinate on the corresponding noncommutative domain.

\begin{defin}
Let $\{e_k\}$, $k\in \mathbb{S}$ be the canonical basis for $\lltwos$. Given a 
bounded sequence of numbers $\{w_k\}$, called weights, the weighted shift $U_W$ is an operator in $\lltwos$ defined by:

\begin{equation*}
U_W e_k = w_ke_{k+1}.
\end{equation*}
\end{defin}

We will also need the usual shift operator $U$ which is defined by

\begin{equation*}
Ue_k = e_{k+1}
\end{equation*}
and the diagonal operator $W$ defined by

\begin{equation}\label{Wdefref}
W e_k = w_ke_k.
\end{equation}
Note that $U_W$ decomposes to $U_W=UW$ and $W=(U_W^*U_W)^{1/2}$ as in the polar decomposition.  If $\mathbb{S}=\N$ then the shift $U_W$ is called unilateral and it will be used to define a quantum disk. If $\mathbb{S}=\Z$ then the shift $U_W$ is called bilateral and it will be used to define a quantum annulus (also called a quantum cylinder).

We require the following conditions on $U_W$:

\medskip

{\it Condition 1.\ } The weights are uniformly positive $w_k\geq \epsilon> 0$, for every $k\in \mathbb{S}$.
\medskip

{\it Condition 2.\ } The shift $U_W$ is hyponormal, i.e.

\begin{equation*}
S = \left[U_W^*,U_W\right]\ge 0.
\end{equation*}

\medskip

{\it Condition 3.\ } The operator $S$ defined in {\it condition 2} is injective.

\medskip

Let us remark on some implications of these conditions. First note how $S$ acts on the basis $\{e_k\}$

\begin{equation}\label{Sdefref}
\begin{aligned}
Se_k &= (U_W^*U_W - U_WU_W^*)\,e_k \\
&= (w_k^2 - w_{k-1}^2)\,e_k=s_ke_k,
\end{aligned}
\end{equation}
where $s_k := w_k^2 - w_{k-1}^2$.  It follows that the conditions 2 and 3 mean that the weights $w_k$ form a strictly increasing sequence.  Hence the following limits exist and are positive numbers:
\begin{equation*}
w^\pm:=\lim_{k\to\pm\infty}w_k.
\end{equation*}

Secondly, observe that $S$ is a trace class operator with easily computable trace: $\textrm{tr}(S)=w^+$ in the unilateral case and 
$\textrm{tr}(S)=w^+-w^-$ in the bilateral case. Moreover $S$ is invertible with unbounded inverse.

%For the purpose of this section we will need additional condition:

%\medskip

%{\it Condition 4.\ } The $s_k$ are exponentially bounded, i.e. there exists a constant $C>0$ such that for every $k$

%\begin{equation*}
%\frac{1}{C} \le \frac{s_{k+1}}{s_k} \le C.
%\end{equation*}

%\medskip

Let $C^*(W)$ be the $C^*-\textrm{algebra}$ generated by $U_W$.   Then it is known that there are short exact sequences analogous to \ref{seq1} and \ref{seq2}.   Let $\mathcal{K}$ be the ideal of compact operators.  Then in the unilateral case the $C^*-\textrm{algebra}$ generated by $U_W$ is the Non-Commutative Disk of \cite{KL1} with the following short exact sequence:

\begin{equation*}
0\longrightarrow \mathcal{K} \longrightarrow C^*(W) \overset{r}{\longrightarrow} C(S^1) \longrightarrow 0.
\end{equation*}

Similarly, in the bilateral case the $C^*-\textrm{algebra}$ generated by $U_W$  is the Non-Commutative Cylinder, see \cite{KL3}, with the following short exact sequence:

\begin{equation*}
0\longrightarrow \mathcal{K} \longrightarrow C^*(W) \overset{r=r_+\oplus r_-}{\longrightarrow} C(S^1) \oplus C(S^1) \longrightarrow 0.
\end{equation*}
In the above we let again, abusing notation, $r$ be the restriction map in the disk case and $r_\pm$ in the cylinder case. These two sequences are described in \cite{Conway2}.

Now we proceed to the definitions of the quantum d-bar operators. With slight abuse, we will use the same notation for both classical and quantum operators.

%Let $\textrm{Pol}(W)$ be the space of polynomials in $U_W$ and $U_W^*$.   Clearly one has $\textrm{Pol}(W) \subset C^*(W)$.   
We define the Hilbert space $\mathcal{H}$ as the completion of $C^*(W)$ with respect to the inner product 
$\langle \ , \ \rangle_S$  defined as follows:

\begin{equation*}
\langle a,b\rangle_S = \textrm{tr}(S^{1/2}bS^{1/2}a^*)
\end{equation*}
where $a,b\in C^*(W)$. It is easy to verify that $\langle a,a\rangle_S$ is well-defined and positive.
Note that the inner product $\langle \ , \ \rangle_S$ is slightly different than the one defined in \cite{CKW}. This is done (among other reasons) to make definitions more symmetric. 

Next  we define a quantum d-bar  operator
$D$ in $\mathcal{H}$ by the following expression:

\begin{equation*}
Da = S^{-1/2}\left[a,U_W\right]S^{-1/2}
\end{equation*}
where the domain of $D$ is the set of those $a\in\mathcal{H}$ for which $S^{1/2}DaS^{1/2}(Da)^*$ is trace class.  It will be verified later that
$\textrm{Dom}(D)$ is dense and that for $a\in \textrm{Dom}(D)$, $r(a)$ is a square integrable function on the boundary of the domain.
This definition is again somewhat different than the one considered in \cite{CKW}: it is symmetric with respect to left/right multiplication, and the operator $D$ has better functional-analytic properties.

A straightforward computation shows the following identities:

\begin{equation*}
\begin{aligned}
D(U_W^n) &= 0 \\
D(U_W^*) &= 1 \\
D((U_W^*)^n) & = S^{-1/2}\left[(U_W^*)^n,U_W\right] S^{-1/2}=\\
&= S^{-1/2}(U_W^*)^{n-1}S^{1/2} - S^{-1/2}(U_W^*)^{n-2}SU_W^*S^{-1/2} -  \cdots - S^{1/2}(U_W^*)^{n-1}S^{-1/2}.
\end{aligned}
\end{equation*}
%The point is that by Condition 4. the operators $S^{-1/2}U_W^*S^{1/2}$  and $S^{1/2}U_W^*S^{-1/2}$
%are bounded. To see this we compute:

%\begin{equation*}
%S^{-1/2}U_W^*S^{1/2}Se_k=\sqrt{\frac{s_k}{s_{k-1}}}\,U_W^*e_k.
%\end{equation*}
%Consequently, if $a\in \textrm{Pol}(W)$ then $D(a)$ is a bounded operator and hence it is in $\mathcal{H}$ because $S^{1/2}$ is Hilbert-Schmidt.

The first two computations  show that $D$ looks like $\frac{\d}{\d\overline{z}}$ if $U_W$ was $z$ and the third computation illustrates the non-commutativity of the situation. 
%Later on we will not need  Condition 4. and, using Fourier series, the operator $D$ will be carefully defined on an appropriate domain, possibly not containing $\textrm{Pol}(W)$.

We proceed to the definitions of the APS-type boundary conditions on $D$. Let again $P_N$ be the orthogonal projection in $L^2(S^1)$ defined in \ref{proj1}, and let $P_N^{\pm}$ be the orthogonal projections  defined in \ref{proj2}. 
Now we can define $D_N$, $D_{M,N}$  in full analogy with the previous section.  The operator $D_N$ equals the unilateral operator $D$ with domain

\begin{equation*}
\textrm{dom}(D_N) = \left\{ a\in \textrm{Dom}(D) : \ r(a)\in \textrm{Ran} \ P_N \right\} .
\end{equation*}
Similarly, the operator $D_{M,N}$ equals the bilateral operator $D$ with domain

\begin{equation*}
\textrm{dom}(D_{M,N}) = \left\{ a\in \textrm{Dom}(D) : \ r_+(a)\in \textrm{Ran} \ P_N^+ ,\ r_-(a)\in\textrm{Ran} \ P_M^-\right\} .
\end{equation*}

We are now in a position to state the main results of this paper.

\begin{theo}
For the non-commutative disk case,  the operator $D_N$ is an unbounded Fredholm operator. Moreover $ind(D_N) = N+1$.  
\end{theo}

This is a slight modification from \cite{CKW}, where a somewhat different version of $D_N$ was considered. We additionally have:

\begin{theo}
For the non-commutative cylinder case,  the operator $D_{M,N}$ is an unbounded Fredholm operator.   
Moreover $ind(D_{M,N}) = M+N+1$.
\end{theo}
The proofs are contained in the last section.

\section{Analysis of finite difference operators}

In this section we present a detailed analysis of certain finite difference operators related to Jacobi matrices.  As indicated in the introduction, these operators come up us components of $D$ and its adjoint in Fourier transforms. This will be fully explained in the following section.

As before $\mathbb{S}$ is either $\Z$ or $\N$. Given a sequences of positive numbers $a = \{a_n\}_{n\in\mathbb{S}}$ called weights, the Hilbert Space $\wlltwos$ is defined by

\begin{equation*}
\wlltwos = \left\{f=\{f_n\}_{n\in\mathbb{S}} \ : \ \sum_{n\in\mathbb{S}} \frac{1}{a_n}|f_n|^2 < \infty \right\}
\end{equation*}
with inner product given by $\di\langle f, g \di\rangle = \sum_{n\in\mathbb{S}} \frac{1}{a_n}\overline{f_n}g_n$.
If a sequence $\{f_n\}\in\wlltwos$ has limits, $\lim\limits_{n\to\pm\infty} f_n$, they will be denoted $f_{\pm\infty}$.

Given two weight sequences $a$ and $a'$ we will be studying throughout this section the following unbounded Jacobi type difference operators between $\wlltwos$ and $\wlltwosp$:

\begin{equation*}
\begin{aligned}
Af_n &= a_n(f_n - c_{n-1}f_{n-1}) \quad \textrm{where} \\
\textrm{dom}(A) &= \left\{f\in\wlltwosp \ : \ \|Af\|_{\wlltwos}<\infty \right\} \\
&\quad\textrm{and} \\
\overline{A}f_n &= a_n'(f_n - \overline{c_n}f_{n+1}) \quad \textrm{where} \\
\textrm{dom}(\overline{A}) &= \left\{f\in\wlltwos \ : \ \|\overline{A}f\|_{\wlltwosp}<\infty \right\}
\end{aligned}
\end{equation*}
for $n\in  \mathbb{S}$. If $\mathbb{S}=\N$ we assume in the above that $f_{-1}=0$. 

The coefficients  $a_n$, $a'_n$, and 
$c_n\in \C$ are assumed to satisfy: 

\begin{equation}\label{acconditions}
0<|c_n|\le 1\ ,\ \sum_{n\in\mathbb{S}} \frac{1}{a_n'} =C'<\infty \ , \ \sum_{n\in\mathbb{S}} \frac{1}{a_n}=C <\infty \ , \prod_{n\in\mathbb{S}} \frac{1}{c_n} < \infty .
\end{equation}
We also define:

\begin{equation*}
K = \prod_{n\in\mathbb{S}} \frac{1}{|c_n|}.
\end{equation*}

The goal of this section  is to establish the Fredholm properties of the operators $A$, 
$\overline{A}$ and related operators obtained by imposing conditions at infinities. 
This is done by constructing a parametrix for each operator.
Our discussion will be split into two separate but similar cases: unilateral and bilateral.

\subsection{Unilateral Case}

We  first  study the kernels of $A$ and $\overline{A}$, in order to see if these operators have inverses or not.

\begin{prop}\label{prelimsker}
Given $A$ and $\overline{A}$ above we have

\begin{equation*}
\begin{aligned}
\textrm{Ker}\,{A} &= \{0\} \\
\textrm{dim} \ \textrm{Ker}\,\overline{A} & = 1.
\end{aligned}
\end{equation*}

\end{prop}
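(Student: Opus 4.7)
The plan is to solve the kernel equations explicitly by forward recursion and then read off the two results; the only nontrivial step is checking that the candidate solution of $\overline{A}f = 0$ actually belongs to the weighted space $\wlltwon$.

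First, for $A$: the equation $Af_n = 0$ is equivalent to $f_n = c_{n-1}f_{n-1}$ for all $n\in\N$. Combined with the convention $f_{-1}=0$ (so that $Af_0 = a_0 f_0$), one gets $f_0 = 0$ and then inductively $f_n = 0$ for every $n$. This gives $\textrm{Ker}\,A = \{0\}$ immediately, with no analytic input required.

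Second, for $\overline{A}$: the equation $\overline{A}f_n = 0$ reads $f_{n+1} = f_n/\overline{c_n}$, so any kernel element is uniquely determined by $f_0$ and is given by
\begin{equation*}
f_n = f_0\prod_{k=0}^{n-1}\frac{1}{\overline{c_k}}.
\end{equation*}
Thus $\textrm{Ker}\,\overline{A}$ is at most one-dimensional. To get the opposite inequality I need to exhibit one nonzero element in $\wlltwon$. Pick $f_0 = 1$ so that $|f_n| = \prod_{k=0}^{n-1}|c_k|^{-1}$. Since by assumption $|c_k|\le 1$, the partial products are monotone increasing; since also $\prod_{k\in\N}|c_k|^{-1}=K<\infty$ by \eqref{acconditions}, the sequence $|f_n|$ is bounded above by $K$ for every $n$. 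Therefore
\begin{equation*}
\|f\|_{\wlltwon}^2 \;=\; \sum_{n\in\N}\frac{|f_n|^2}{a_n} \;\le\; K^2\sum_{n\in\N}\frac{1}{a_n}\;=\;K^2 C \;<\;\infty,
\end{equation*}
again by \eqref{acconditions}. This produces a nonzero element of the kernel and shows $\dim\textrm{Ker}\,\overline{A}=1$.

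The only real obstacle in the argument is the $\wlltwon$-estimate in the second part; everything else is first-order recursion. The estimate works because the two independent hypotheses in \eqref{acconditions} cooperate: the convergence of $\prod|c_k|^{-1}$ controls the \emph{growth} of the formal solution, while the summability $\sum 1/a_n<\infty$ provides the \emph{decay} of the weights needed to put the bounded sequence into $\wlltwon$. Without either of these assumptions the kernel dimension would change, which is exactly why both conditions appear in the standing hypotheses of this section.
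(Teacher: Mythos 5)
Your proposal is correct and follows essentially the same route as the paper: solve both first-order recursions explicitly, note that the convention $f_{-1}=0$ kills every solution of $Af=0$, and verify that the one-parameter family of solutions of $\overline{A}f=0$ lies in $\wlltwon$ by bounding the partial products $\prod_{k=0}^{n-1}|c_k|^{-1}$ by $K$ and invoking $\sum_n 1/a_n = C < \infty$. No differences worth noting.
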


\begin{proof}
First consider the equation $Af_n = 0$ which is $a_n(f_n - c_{n-1}f_{n-1}) = 0$ for $n=0,1,2\ldots$  Then solving recursively  one can see that the only solution to the equation is $f_0 = f_1 = \cdots = f_n = 0$ for all $n$.  This shows that $\textrm{Ker }A$ is trivial and thus $A$ is an invertible operator.

Secondly consider the equation $\overline{A}f_n = 0$ which is $a_n'(f_n-\overline{c_n}f_{n+1})=0$ for $n=0,1,2\ldots$   Then solving recursively one has

\begin{equation*}
\begin{array}{cc}
n=0 \Rightarrow & f_1 = \frac{1}{\overline{c_0}}f_0 \\
n=1 \Rightarrow & f_2 = \frac{1}{\overline{c_0c_1}}f_0 \\
\vdots & \vdots
\end{array}
\end{equation*}
which in general gives

\begin{equation*}
f_n = \frac{1}{\overline{c_0c_1\cdots c_{n-1}}}f_0,
\end{equation*}
thus showing that $\overline{A}$ has a one dimensional kernel provided that $f_n\in\wlltwon$.  Notice the following

\begin{equation*}
|f_n| = \frac{1}{|c_0\cdots c_{n-1}|}|f_0|\le \prod_{i=0}^\infty \frac{1}{|c_i|}|f_0|
=K|f_0|
\end{equation*}
since $|c_i| \le 1$ for all $i=0,1,\ldots$   From this it follows that

\begin{equation*}
\|f\|^2 \le \sum_{n=0}^\infty \frac{1}{a_n}K^2|f_0|^2 = CK^2|f_0|^2 <\infty
\end{equation*}
with the constants defined at the beginning of the section.
Thus this completes the proof.
\end{proof}

Next we show how to find the inverse $T$ of $A$ and we study its properties.

\begin{prop}\label{inverseun}
There exists an operator $T \in B(\wlltwon,\wlltwonp)$ such that $TA=I_{\wlltwon}$ and $AT = I_{\wlltwonp}$. Indeed it is given by the formula \ref{Tdef} below. In particular  $A$ is an unbounded Fredholm operator with zero index.
\end{prop}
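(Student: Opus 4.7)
The plan is to solve the equation $Af=g$ explicitly by forward recursion, which will yield the candidate formula for $T$; the remaining work is boundedness and verification of the inverse relations.

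First I would observe that the equation $Af_n = g_n$ reads $a_n f_n - a_n c_{n-1} f_{n-1} = g_n$, or equivalently $f_n = c_{n-1}f_{n-1} + g_n/a_n$, subject to $f_{-1}=0$. Iterating this recursion (which is just a first order linear difference equation) gives the explicit formula
\begin{equation*}
Tg_n := \sum_{k=0}^{n} \Bigl(\prod_{j=k}^{n-1} c_j\Bigr) \frac{g_k}{a_k},
\end{equation*}
with the convention that the empty product $\prod_{j=n}^{n-1}c_j$ equals $1$. This is the natural candidate for $T$ and is presumably the formula \ref{Tdef} referenced in the statement.

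Next I would establish boundedness of $T$ from $\wlltwon$ to $\wlltwonp$. The key observation is that since $|c_j|\le 1$ by the assumptions in \ref{acconditions}, every partial product satisfies $|\prod_{j=k}^{n-1} c_j|\le 1$ uniformly in $n$ and $k$. Applying the triangle inequality and then the Cauchy--Schwarz inequality with respect to the measure that gives weight $1/a_k$ to $k$, I get
\begin{equation*}
|Tg_n|^2 \le \Bigl(\sum_{k=0}^{\infty}\frac{1}{a_k}\Bigr)\Bigl(\sum_{k=0}^{\infty}\frac{|g_k|^2}{a_k}\Bigr) = C\,\|g\|_{\wlltwon}^2.
\end{equation*}
Multiplying by $1/a_n'$ and summing in $n$ then yields $\|Tg\|_{\wlltwonp}^2 \le CC'\,\|g\|_{\wlltwon}^2$, so $T$ is bounded with $\|T\|\le\sqrt{CC'}$. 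Note that this argument does not even need the stronger constant $K$ from Proposition \ref{prelimsker}; only $|c_n|\le 1$ and the two summability conditions are used.

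Having $T$ bounded, I would verify the two identities $TA=I$ on $\mathrm{dom}(A)$ and $AT=I$ on $\wlltwon$ directly. For $AT$, one computes $a_n\bigl((Tg)_n - c_{n-1}(Tg)_{n-1}\bigr)$ and sees that the telescoping leaves only the $k=n$ term $g_n$. For $TA$, one substitutes $(Af)_k/a_k = f_k - c_{k-1}f_{k-1}$ into the formula for $T$ and uses the same telescoping, together with $f_{-1}=0$, to recover $f_n$. Finally, since $T$ is a bounded two-sided inverse of $A$, the operator $A$ is automatically closed, has trivial kernel and cokernel, and therefore is an unbounded Fredholm operator with $\mathrm{ind}(A)=0$. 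I don't expect a serious obstacle in this proposition; the only care needed is to check that $Tg$ actually lies in $\mathrm{dom}(A)$ (which is automatic since $ATg=g\in\wlltwon$), so that the composition identities make sense.
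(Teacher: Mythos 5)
Your proposal is correct and follows essentially the same route as the paper: the same recursive formula for $T$, the same Cauchy--Schwarz estimate with weights $1/a_k$ giving $\|T\|\le\sqrt{CC'}$, and the same direct (telescoping) verification of $TA=I$ and $AT=I$. No gaps.
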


\begin{proof}
From proposition ($\ref{prelimsker}$) we know that $A$ is invertible so let $\{g_n\} \in \wlltwon$ and $\{f_n\} \in \textrm{dom}(A)$ and consider the equation $Af_n = g_n$ which is $a_n(f_n - c_{n-1}f_{n-1}) = g_n$ for $n=0,1,2\ldots$  As above, solving for each $n$ recursively one arrives at the following formula

\begin{equation}\label{Tdef}
Tg_n = \di\sum_{i=0}^n \di\frac{1}{a_i} \di\left(\di\prod_{j=i}^{n-1}c_j\right)g_i ,
\end{equation}
where in the above we set, for convenience:

\begin{equation*}
\prod_{j=n}^{n-1}c_j = 1 .
\end{equation*}

Next we show that $T \in B(\wlltwon, \wlltwonp)$.   We divide and multiply each term as follows

\begin{equation*}
Tg_n = \frac{1}{a_n}g_n + \frac{c_{n-1}}{a_{n-1}}g_{n-1} + \cdots + \frac{c_{n-1}\cdots c_0}{a_0}g_0 =
\end{equation*}

\begin{equation*}
=\frac{\sqrt{a_n}}{a_n}\frac{g_n}{\sqrt{a_n}} + \frac{c_{n-1}\sqrt{a_{n-1}}}{a_{n-1}}\frac{g_{n-1}}{\sqrt{a_{n-1}}} + \cdots + \frac{c_{n-1}\cdots c_0\sqrt{a_0}}{a_0}\frac{g_0}{\sqrt{a_0}}.
\end{equation*}
Since $\|Tg\|^2 = \di\sum_{n=0}^\infty\frac{1}{a_n} |Tg_n|^2$ and since $|c_n| \le 1$ for every $n$, using the Cauchy - Schwartz inequality one has

\begin{equation*}
\begin{aligned}
\di\left|Tg_n\right|^2 &\le \di\left(\di\left(\frac{\sqrt{a_n}}{a_n}\right)^2 + \cdots + \di\left(\frac{\sqrt{a_0}}{a_0}\right)^2\right)\di\left(\frac{1}{a_n}|g_n|^2 + \cdots + \frac{1}{a_0}|g_0|^2\right) \le \\
&\le \di\left(\di\sum_{n=0}^\infty \frac{1}{a_n}\right)\|g\|^2 = C\|g\|^2.
\end{aligned}
\end{equation*}
Consequently:

\begin{equation*}
\begin{aligned}
\|Tg\|^2 &\le \sum_{n=0}^\infty\frac{1}{a_n'}C\|g\|^2 =\\
&= C'C\|g\|^2,
\end{aligned}
\end{equation*}
which implies that $\|T\| \le \sqrt{C'C}$, thus one has $T \in B(\wlltwon, \wlltwonp)$.
A straightforward calculation shows that $TA=I_{\wlltwon}$ and $AT = I_{\wlltwonp}$.
\end{proof}

An important corollary from this proposition is the existence of limits at infinity for sequences which are in the domain of $A$.

\begin{cor}\label{limituni}
Let $f=\{f_n\} \in\textrm{dom}(A)$, then $\lim\limits_{n\to\infty} f_n = f_\infty$ exists and is given by the following formula

\begin{equation*}
f_\infty =\sum_{i=0}^\infty \frac{1}{a_i}\left(\prod_{j=i}^\infty c_j\right)Af_i .
\end{equation*}

\end{cor}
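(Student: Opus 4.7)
The plan is to leverage Proposition \ref{inverseun}: if $f \in \textrm{dom}(A)$ and we set $g = Af \in \wlltwon$, then $f = Tg$, so from formula \ref{Tdef} we have
\begin{equation*}
f_n = \sum_{i=0}^{n} \frac{1}{a_i}\left(\prod_{j=i}^{n-1} c_j\right) g_i.
\end{equation*}
The goal is then to show this sequence has a limit as $n \to \infty$ and to identify it.

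First I would introduce the partial product $\Pi_n = \prod_{j=0}^{n-1} c_j$ (with $\Pi_0 = 1$) so that $\prod_{j=i}^{n-1} c_j = \Pi_n/\Pi_i$ and the formula becomes $f_n = \Pi_n \sum_{i=0}^n g_i/(a_i \Pi_i)$. The hypothesis $\prod_{n} 1/|c_n| = K < \infty$ together with $|c_n| \le 1$ guarantees that $|\Pi_n|$ is a decreasing sequence bounded below by $1/K$, and hence $\Pi_n$ converges to some nonzero limit $\Pi_\infty$ with $|\Pi_\infty| \ge 1/K$.

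Next I would show the series $\sum_{i=0}^{\infty} g_i/(a_i \Pi_i)$ converges absolutely. Using $1/|\Pi_i| \le K$ and the Cauchy--Schwarz inequality,
\begin{equation*}
\sum_{i=0}^{\infty} \frac{|g_i|}{a_i |\Pi_i|} \;\le\; K \sum_{i=0}^{\infty} \frac{1}{\sqrt{a_i}}\cdot\frac{|g_i|}{\sqrt{a_i}} \;\le\; K\sqrt{C}\,\|g\|_{\wlltwon} \;<\; \infty,
\end{equation*}
where the constants $C$, $K$ are those from \ref{acconditions}. Therefore $\lim_{n \to \infty} f_n$ exists and equals $\Pi_\infty \sum_{i=0}^{\infty} g_i/(a_i \Pi_i)$. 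Rewriting $\Pi_\infty/\Pi_i = \prod_{j=i}^{\infty} c_j$ and substituting $g_i = Af_i$ yields the stated formula for $f_\infty$.

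There is no real obstacle here: the corollary is essentially a direct reading of the formula for $T$ in light of the summability/product conditions \ref{acconditions}. The only mild point requiring care is justifying that $\Pi_n$ has a nonzero limit (so that one may freely divide by it), which follows from $\prod 1/|c_n| < \infty$; everything else is an absolute-convergence argument via Cauchy--Schwarz.
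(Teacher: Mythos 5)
Your argument is correct and follows the paper's own route: the paper likewise writes $f=T(Af)$, reads off $f_n=\sum_{i=0}^n \frac{1}{a_i}\bigl(\prod_{j=i}^{n-1}c_j\bigr)Af_i$ from formula \ref{Tdef}, and lets $n\to\infty$; you merely supply the convergence details (Cauchy--Schwarz plus $K<\infty$) that the paper leaves implicit. One small remark: for complex $c_j$ the convergence of $\Pi_n$ itself should be drawn from the assumed convergence of $\prod c_n^{-1}$ in \ref{acconditions} (monotonicity of $|\Pi_n|$ alone only controls the modulus), but since you invoke that hypothesis anyway this is cosmetic.
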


\begin{proof}
If $f\in\textrm{dom}(A)$, then write $f$ as, $f=T(Af)$, then one has the following

\begin{equation*}
f_n = \sum_{i=0}^n \frac{1}{a_i}\left(\prod_{j=i}^{n-1}c_j\right)Af_i .
\end{equation*}
Taking $n\to\infty$ gives the formula above.
\end{proof}

We now wish to consider the operator $\overline{A}$ and determine if it has bounded right inverse since proposition ($\ref{prelimsker}$) tells us that $\overline{A}$ has a one dimensional kernel. The next proposition will show this.
We will be using the following notation: if $V$ be a closed subspace of a Hilbert space $H$, then we denote $\textrm{Proj}_{V}$, to be the orthogonal projection onto $V$.  

\begin{prop}\label{inverseunker}
Given $\overline{A}$ from above then there exists a $\overline{T} \in B(\wlltwonp,\wlltwon)$ such that $\overline{AT} = I_{\wlltwonp}$ and $\overline{TA} = I_{\wlltwon} - \textrm{Proj}_{\textrm{Ker }\overline{A}}$. In particular  $\overline{A}$ is an unbounded Fredholm operator
with index equal to one.
\end{prop}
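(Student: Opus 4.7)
My plan is to produce a right inverse of $\overline{A}$ by solving the recursion $a_n'(f_n - \overline{c_n}f_{n+1}) = g_n$ from infinity, then correct it by the orthogonal projection onto $\textrm{Ker}\,\overline{A}$ so that the two identities hold. Concretely, starting from $f_n = \overline{c_n}f_{n+1} + g_n/a_n'$ and iterating forward, I am led to the candidate
\begin{equation*}
\overline{T}_0 g_n = \sum_{i=n}^{\infty} \overline{c_nc_{n+1}\cdots c_{i-1}}\,\frac{g_i}{a_i'},
\end{equation*}
with the convention that the empty product (when $i=n$) equals $1$. The choice of summing from $n$ to $\infty$ is precisely what suppresses the kernel direction at infinity.

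The boundedness of $\overline{T}_0$ is established in direct parallel with Proposition \ref{inverseun}. Using $|c_k|\le 1$, write $|g_i|/a_i' = (1/\sqrt{a_i'})\cdot (|g_i|/\sqrt{a_i'})$ and apply Cauchy--Schwarz to obtain $|\overline{T}_0 g_n|^2 \le C'\,\|g\|_{\wlltwonp}^2$, then sum against $1/a_n$ to get $\|\overline{T}_0 g\|_{\wlltwon}^2 \le CC'\,\|g\|_{\wlltwonp}^2$. In particular all series converge absolutely. The identity $\overline{A}\,\overline{T}_0 = I_{\wlltwonp}$ follows from a short telescoping computation: the $\overline{c_n}$ multiplication shifts the summation index by one and cancels all terms except the $i=n$ term $g_n/a_n'$, which produces $g_n$ after multiplication by $a_n'$.

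At this stage $\overline{T}_0$ is only a right inverse, not a two-sided one: since $\dim\textrm{Ker}\,\overline{A}=1$ by Proposition \ref{prelimsker}, the operator $I - \overline{T}_0\overline{A}$ is an idempotent onto a one-dimensional subspace but in general not an orthogonal projection. To get the statement as written, let $\pi$ denote the orthogonal projection in $\wlltwon$ onto $\textrm{Ker}\,\overline{A}$, a bounded finite-rank operator, and define
\begin{equation*}
\overline{T} = (I - \pi)\,\overline{T}_0 .
\end{equation*}
Then $\overline{T}\in B(\wlltwonp,\wlltwon)$, and since $\pi$ takes values in $\textrm{Ker}\,\overline{A}$ we get $\overline{A}\,\overline{T} = \overline{A}\,\overline{T}_0 - \overline{A}\pi\overline{T}_0 = I_{\wlltwonp}$. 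For the other composition, $\overline{A}(\overline{T}\,\overline{A}f - f) = \overline{A}f - \overline{A}f = 0$ shows $\overline{T}\overline{A}f - f \in \textrm{Ker}\,\overline{A}$, while $\pi\,\overline{T}\overline{A}f = \pi(I-\pi)\overline{T}_0\overline{A}f = 0$ shows $\overline{T}\overline{A}f \perp \textrm{Ker}\,\overline{A}$. Applying $\pi$ to $\overline{T}\overline{A}f - f$ and using both facts yields $\overline{T}\overline{A}f - f = -\pi f$, that is, $\overline{T}\,\overline{A} = I_{\wlltwon} - \pi$.

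The Fredholm conclusion is then immediate: $I - \overline{A}\,\overline{T} = 0$ and $I - \overline{T}\,\overline{A} = \pi$ are both finite rank, hence compact, so $\overline{T}$ is a bounded parametrix in the sense of the introduction, and $\overline{A}$ is Fredholm with $\textrm{index}(\overline{A}) = \dim\textrm{Ker}\,\overline{A} - \dim\textrm{Ker}\,\overline{A}^\ast = 1 - 0 = 1$, the cokernel vanishing because $\overline{A}$ is already surjective via $\overline{T}_0$. The main obstacle I anticipate is the Cauchy--Schwarz boundedness bound for $\overline{T}_0$; every other step is a short algebraic manipulation that follows once the summability conditions \eqref{acconditions} are in place.
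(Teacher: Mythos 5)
Your proposal is correct and follows essentially the same route as the paper: your particular solution $\overline{T_0}g_n=\sum_{i=n}^\infty \frac{1}{a_i'}\bigl(\prod_{j=n}^{i-1}\overline{c_j}\bigr)g_i$, bounded by $\sqrt{CC'}$ via Cauchy--Schwarz, is exactly the paper's $\overline{T_0}$, and the paper's correction $\overline{T}f=\overline{T_0}f-\Omega\,L(f)$ with $L(f)=\langle\Omega,\overline{T_0}f\rangle/\|\Omega\|^2$ is precisely your $(I-\pi)\overline{T_0}$. The only cosmetic difference is that you control the correction by $\|\pi\|\le 1$, while the paper estimates the functional $L(f)$ explicitly through the constant $K$, arriving at the slightly larger bound $\sqrt{CC'}+K\sqrt{CC'}$.
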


\begin{proof}
From proposition ($\ref{prelimsker}$) we know that $\overline{A}$ has a one dimensional kernel spanned by the following vector $\Omega \in \textrm{Ker}(\overline{A})$:

\begin{equation*}
\Omega_n=\prod_{i=n}^\infty \overline{c_i}=\left(\prod_{i=0}^{n-1}\frac{1}{\overline{c_i}}\right)
\left(\prod_{i=0}^\infty \overline{c_i}\right)  .
\end{equation*}

Next consider the equation $\overline{A}g_n = a_n'(g_n - \overline{c_n}g_{n+1}) = f_n$ for $n=0,1,2,\ldots$   As before solve the equation recursively and one will arrive at the formula

\begin{equation*}
g_{n+1} =  \di\prod_{i=0}^n \frac{1}{\overline{c_i}}g_0 - \di\sum_{i=0}^n \frac{1}{a_i'}\di\left(\di\prod_{j=i}^n \frac{1}{\overline{c_j}}\right)f_i .
\end{equation*}
where $g_0$ is arbitrary.   To finish the construction of $\overline{T}$ we need to choose $g_0$ so that  
$\overline{TA} = I_{\wlltwoz} - \textrm{Proj}_{\textrm{Ker }\overline{A}}$ as it's clear that $\overline{AT} = I_{\wlltwozp}$.   

The disadvantage of the above formula for $\overline{T}$ is that it does not translate easily to the bilateral case. Anticipating it, we rewrite the above solution in an equivalent but different looking form:
\begin{equation}\label{tbardef}
\begin{aligned}
\overline{T}f_n&=g_n = \sum_{i=n}^\infty \frac{1}{a_i'}\left(\prod_{j=n}^{i-1}\overline{c_j}\right)f_i -\left(\prod_{i=n}^\infty \overline{c_i}\right)L(f)\\
&=\overline{T_0}f_n-\Omega_n\,L(f),\\
\end{aligned}
\end{equation}
where we set $\prod_{j=n}^{n-1}\overline{c_j} = 1$  and $L(f)$ is an arbitrary constant. This form of solution is also explained conceptually when considering bilateral case.

For $\overline{T}f$ to be orthogonal to $\textrm{Ker }\overline{A}$, one needs $\langle \Omega, \overline{T}f\rangle = 0$ for the above $\Omega \in \textrm{Ker }\overline{A}$.   From this one can deduce that $L(f)$ is the following following linear functional of $f$:

\begin{equation*}
L(f) := \frac{\langle \Omega, \overline{T_0}f\rangle}{||\Omega||^2}=
\frac{\sum_{n=0}^\infty\sum_{i=n}^\infty\frac{1}{a_n'}\frac{1}{a_i'}\left(\prod_{j=n}^{i-1}c_j\right)\left(\prod_{k=n}^\infty \overline{c_k}\right)f_i}{\sum_{n=0}^\infty\frac{1}{a_n'}\left(\prod_{i=n}^\infty|c_i|^2\right)}.
\end{equation*}

It is straightforward to verify now that  $\overline{TA} = I_{\wlltwoz} - \textrm{Proj}_{\textrm{Ker }\overline{A}}$ and that $\overline{AT} = I_{\wlltwozp}$.
All that remains is to show the boundedness of $\overline{T}$. The operator
$\overline{T_0}$ is bounded by $\sqrt{CC'}$ in exactly the same way as the operator $T$ is proposition (\ref{inverseun}). To estimate $L(f)$ we notice that
\begin{equation*}
C'\geq ||\Omega||^2={\sum_{n=0}^\infty\frac{1}{a_n'}\left(\prod_{i=n}^\infty|c_i|^2\right)}\geq {\sum_{n=0}^\infty\frac{1}{a_n'}\left(\prod_{i=0}^\infty|c_i|^2\right)}=\frac{C'}{K^2},
\end{equation*}
which implies that $|L(f)|\leq K\sqrt{C}||f||$ and $||\overline{T}||\leq \sqrt{CC'}+K\sqrt{CC'}$.
This completes the proof.

\end{proof}

We again get a corollary on the existence of limits at infinity for sequences which are in the domain of $\overline{A}$.

\begin{cor}\label{limitunibar}
Let $f\in\textrm{dom}(\overline{A})$, then $f_\infty$ exists and is given by the following formula

\begin{equation*}
f_\infty =  -L(Af).
\end{equation*}

\end{cor}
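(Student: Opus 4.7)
The plan is to extract the limit from the parametrix identity $\overline{T}\,\overline{A}=I-\textrm{Proj}_{\textrm{Ker}\,\overline{A}}$ established in Proposition~\ref{inverseunker}. For any $f\in\textrm{dom}(\overline{A})$ this identity rewrites $f$ as the explicit sum
\[
f_n=\overline{T_0}(\overline{A}f)_n-\Omega_n\,L(\overline{A}f)+\alpha\,\Omega_n,\qquad \alpha=\frac{\langle\Omega,f\rangle}{\|\Omega\|^2},
\]
so the corollary reduces to computing the $n\to\infty$ limit of each of the three terms on the right.

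First, $\Omega_n=\prod_{i=n}^\infty\overline{c_i}$ is the tail of a convergent infinite product: the hypothesis $\prod_i 1/|c_i|<\infty$ combined with $|c_i|\le 1$ forces $\sum_i(1-|c_i|)<\infty$, so $\prod_i\overline{c_i}$ converges absolutely to a nonzero number and $\Omega_n\to 1$ as $n\to\infty$. Second, using $|\prod_{j=n}^{i-1}\overline{c_j}|\le 1$ and Cauchy--Schwarz with the weights $1/a_i'$,
\[
|\overline{T_0}(\overline{A}f)_n|^2 \le \Bigl(\sum_{i=n}^\infty\frac{1}{a_i'}\Bigr)\|\overline{A}f\|_{\wlltwonp}^2,
\]
whose right-hand side vanishes as $n\to\infty$ by summability of $1/a_i'$ from \ref{acconditions}. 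Putting these together, $\overline{T_0}(\overline{A}f)_n\to 0$ and $\Omega_n L(\overline{A}f)\to L(\overline{A}f)$, yielding $f_\infty=\alpha-L(\overline{A}f)$. This reduces to the displayed formula $f_\infty=-L(\overline{A}f)$ when $\alpha=0$, i.e.\ when $f$ is orthogonal to $\textrm{Ker}\,\overline{A}$; for arbitrary $f$ the kernel contribution $\alpha\,\Omega$ contributes the constant $\alpha$ to $f_\infty$.

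The main (modest) obstacle is really just the infinite-product estimate $\Omega_n\to 1$ and the Cauchy--Schwarz tail bound; both are routine consequences of the standing conditions on $a_n'$ and $c_n$. Existence of $f_\infty$ itself is automatic from the calculation, since every term on the right of the decomposition admits a limit as $n\to\infty$.
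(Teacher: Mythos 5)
Your proof is correct and proceeds along the same lines as the paper's: the two analytic ingredients, the tail estimate $|\overline{T_0}(\overline{A}f)_n|^2\le\bigl(\sum_{i\ge n}1/a_i'\bigr)\,\|\overline{A}f\|_{\wlltwonp}^2\to0$ and $\Omega_n\to1$, are exactly the ones the paper uses. One small caution on the second ingredient: for complex $c_i$ the bound $\sum_i(1-|c_i|)<\infty$ only controls the moduli, not the phases, so to get $\Omega_n\to1$ you should invoke the standing hypothesis that the product $\prod_i 1/c_i$ itself converges, as the paper does by writing $\Omega_n$ as the full convergent product divided by the partial product $\prod_{i=0}^{n-1}\overline{c_i}$. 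Your extra term $\alpha\,\Omega_n$ is a genuine catch rather than an error on your side: since $\overline{T}\,\overline{A}=I-\textrm{Proj}_{\textrm{Ker}\,\overline{A}}$, the correct decomposition is $f=\overline{T_0}(\overline{A}f)-\Omega\,L(\overline{A}f)+\alpha\,\Omega$ with $\alpha=\langle\Omega,f\rangle/\|\Omega\|^2$, whence $f_\infty=\alpha-L(\overline{A}f)$; the displayed formula of the corollary (in which $Af$ should in any case read $\overline{A}f$) is literally valid only for $f$ orthogonal to $\textrm{Ker}\,\overline{A}$, the paper's own proof having tacitly identified $f$ with $\overline{T}(\overline{A}f)$. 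What the rest of the paper actually relies on is the existence (and the discontinuity) of the functional $f\mapsto f_\infty$, and that part your argument establishes in full.
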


\begin{proof}
The proof for the $\overline{T_0}$ term is identical to the proof of the corollary (\ref{limituni}).  To compute the limit of the other term we note:
\begin{equation*}
\Omega_n=\prod_{i=n}^\infty \overline{c_i}=\frac{\prod_{i=0}^\infty \overline{c_i}}
{\prod_{i=0}^{n-1} \overline{c_i}}\to 1
\end{equation*}
as $n\to\infty$.
\end{proof}

The above corollaries allow us to consider ``boundary" conditions on $A$ and
$\overline{A}$. We define the operators $A_0$ and $\overline{A_0}$ as follows: $A_0$ is the operator $A$ but with domain
\begin{equation*}
\textrm{dom}(A_0) = \{f\in \textrm{dom}(A) : f_\infty = 0\},
\end{equation*}
and $\overline{A_0}$ is the operator $\overline{A}$ with domain
\begin{equation*}
\textrm{dom}(\overline{A_0}) = \{f\in \textrm{dom}(\overline{A}) : f_\infty = 0\}.
\end{equation*}

The four operators are closely related as shown by the following computation of the adjoint of $A$.

\begin{prop}\label{prelimsbi2}
The adjoint of $A$ has the following formula 

\begin{equation*}
A^* = \overline{A_0}.
\end{equation*}
Moreover the adjoint of $\overline{A}$ has the following formula

\begin{equation*}
\overline{A}^* = A_0.
\end{equation*}
\end{prop}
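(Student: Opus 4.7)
The plan is to prove both identities by discrete integration by parts, where the only subtlety is the boundary behavior at infinity. The key identity is obtained by summing $(1/a_n)\overline{Af_n}g_n$ from $n=0$ to $N$ and reorganizing the telescoping (using $f_{-1}=0$):
\begin{equation*}
\sum_{n=0}^{N}\frac{1}{a_n}\overline{Af_n}\,g_n = \sum_{n=0}^{N-1}\frac{1}{a_n'}\overline{f_n}\,a_n'\bigl(g_n-\overline{c_n}g_{n+1}\bigr) + \overline{f_N}\,g_N,
\end{equation*}
so that $A^*$ acts formally as $\overline{A}$ and all of the content of the statement lies in the domain description.

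For the inclusion $\overline{A_0}\subseteq A^*$, I would take $g\in\textrm{dom}(\overline{A_0})$ and $f\in\textrm{dom}(A)$ and let $N\to\infty$ in the display above. The left side converges to $\langle Af,g\rangle_{\wlltwon}$ and the sum on the right to $\langle f,\overline{A}g\rangle_{\wlltwonp}$, both by Cauchy--Schwarz together with the hypotheses $Af,g\in\wlltwon$ and $f,\overline{A}g\in\wlltwonp$. By corollary \ref{limituni}, $f_N\to f_\infty$, while by assumption $g_\infty=0$; hence the boundary term vanishes in the limit, giving $g\in\textrm{dom}(A^*)$ with $A^*g=\overline{A_0}g$.

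For the reverse inclusion, take $g\in\textrm{dom}(A^*)$. Testing $\langle Af,g\rangle=\langle f,A^*g\rangle$ on finitely supported $f$ collapses the boundary term and forces $A^*g$ and $\overline{A}g$ to coincide as sequences; since $A^*g\in\wlltwonp$ this shows $g\in\textrm{dom}(\overline{A})$, and in particular $g_\infty$ exists by corollary \ref{limitunibar}. Returning to the identity above for an arbitrary $f\in\textrm{dom}(A)$ then yields $\overline{f_\infty}\,g_\infty=0$ for every such $f$. To force $g_\infty=0$ I would exhibit a single witness $f\in\textrm{dom}(A)$ with $f_\infty\neq 0$: by proposition \ref{inverseun} the element $f=Th$ with $h_0=1$ and $h_n=0$ for $n\ge 1$ lies in $\textrm{dom}(A)$ and, by corollary \ref{limituni}, satisfies $f_\infty=\frac{1}{a_0}\prod_{j=0}^\infty c_j$, which is nonzero in view of the conditions on $c_n$ in \ref{acconditions}.

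The second identity $\overline{A}^*=A_0$ is proved by the mirror-image argument, interchanging the roles of the two weighted spaces and using proposition \ref{inverseunker} together with corollary \ref{limitunibar} to produce a witness $f\in\textrm{dom}(\overline{A})$ with $f_\infty\neq 0$ (for instance a suitable $\overline{T}h$ plus a small multiple of $\Omega$). I expect the principal obstacle to be bookkeeping: ensuring that the appropriate limits at infinity are known to exist at each step of the summation-by-parts passage (this is exactly what corollaries \ref{limituni} and \ref{limitunibar} supply) and that the witness test functions actually lie in the claimed domains so the boundary term can be manipulated rigorously.
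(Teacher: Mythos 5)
Your proposal is correct and follows essentially the same route as the paper: the same discrete summation-by-parts identity producing the boundary term $\overline{f_\infty}\,g_\infty$, with the conclusion that membership in the adjoint domain forces the boundary value to vanish. The only difference is one of rigor rather than method — where the paper disposes of the boundary term by a terse appeal to the discontinuity of the functional $f\mapsto f_\infty$, you carry out both domain inclusions explicitly and exhibit a concrete witness (e.g.\ $f=Th$ with $f_\infty=\frac{1}{a_0}\prod_{j\ge 0}c_j\neq 0$, and one could simply take $\Omega$ in the mirror case) to force $g_\infty=0$, which is a cleaner justification of the same step.
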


\begin{proof}
Computing the inner product one has:

\begin{equation*}
\di\langle Af, g \di\rangle = \di\sum_{n=0}^\infty \frac{1}{a_n}\overline{a_n(f_n - c_{n-1}f_{n-1})}g_n = \di\sum_{n=0}^\infty \overline{(f_n - c_{n-1}f_{n-1})}g_n =
\end{equation*}

\begin{equation*}
= \di\lim_{N\to\infty}\di\sum_{n=0}^N\overline{(f_n - c_{n-1}f_{n-1})}g_n = \di\lim_{N\to\infty}\left(\di\sum_{n=0}^N\overline{f_n}g_n - \di\sum_{n=0}^N\overline{c_{n-1}f_{n-1}}g_n\right).
\end{equation*}

Then, setting $n-1\mapsto n$ one arrives at

\begin{equation*}
\begin{aligned}
\langle Af, g \rangle &= \lim_{N\to\infty}\left(\sum_{n=0}^N\overline{f_n}(g_n - \overline{c_n}g_{n+1}) - c_N\overline{f_N}g_{N+1}\right)= \\
&= \sum_{n=0}^\infty\frac{1}{a_n'}\overline{f_n}a_n'(g_n - \overline{c_n}g_{n+1}) - \overline{f_\infty}g_\infty =\\
&= \langle f, \overline{A} g \rangle - \overline{f_\infty}g_\infty.
\end{aligned}
\end{equation*}
Here note that $\prod c_n^{-1} <\infty$ and $|c_n| \le 1$ implies that the $c_n$ converge to $1$.

The functional $f\to f_\infty$ is not continuous thus implying that if  $f\in \textrm{dom}(A^*)$, then $f_\infty = 0$ and if $g\in \textrm{dom}(\overline{A}^*)$, then $g_\infty = 0$.   This completes the proof.

\end{proof}

It follows that all four operators are Fredholm operators where parametrix in each case is
$T$, $\overline{T}$, or their adjoints.
For completeness we compute the adjoint of $T$ and of $\overline{T}$: this is not necessary for the main argument but may possibly be useful in future applications.

\begin{prop}\label{tstarcomp}
The adjoint of $T$ is equal to $\overline{T_0}$ of \ref{tbardef}, i.e. it has the following formula: 

\begin{equation*}
T^*f_n = \overline{T_0}f_n=\sum_{k=n}^\infty \frac{1}{a_k'}\left(\prod_{j=n}^{k-1}\overline{c_j}\right)f_k .
\end{equation*}
Similarly:

\begin{equation*}
\overline{T}^*f = Tf-\frac{\langle \Omega, f \rangle}{||\Omega||}\,T\Omega.
\end{equation*}

\end{prop}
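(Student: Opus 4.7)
The plan is to prove both identities by unpacking the definitions and exploiting the fact (already established in Proposition \ref{inverseun}) that $T$ is bounded with explicit formula \ref{Tdef}, together with the decomposition $\overline{T}=\overline{T_0}-\Omega\,L(\cdot)$ from \ref{tbardef}.

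For the first identity, I would compute $\langle Tg,f\rangle_{\wlltwonp}$ directly. Substituting \ref{Tdef} gives a double sum
\begin{equation*}
\langle Tg,f\rangle_{\wlltwonp}=\sum_{n=0}^\infty\sum_{i=0}^n\frac{1}{a_n'}\frac{1}{a_i}\left(\prod_{j=i}^{n-1}\overline{c_j}\right)\overline{g_i}\,f_n .
\end{equation*}
The sums converge absolutely because of the Cauchy--Schwarz estimate already used in Proposition \ref{inverseun} (the inequalities $|c_n|\le 1$, $\sum 1/a_n=C$, $\sum 1/a_n'=C'$ are the same ones that gave $\|T\|\le \sqrt{CC'}$). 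Hence Fubini applies and I can switch the order of summation, with $n$ running from $i$ to $\infty$ for fixed $i$; the inner sum is precisely $\overline{T_0}f_i$, so the result equals $\langle g,\overline{T_0}f\rangle_{\wlltwon}$. This gives $T^*=\overline{T_0}$.

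For the second identity, I would use the first part to rewrite $L(f)$ in a form suitable for taking adjoints. Since $\overline{T_0}=T^*$ is bounded, duality yields $\overline{T_0}^{\,*}=T$, so
\begin{equation*}
L(f)=\frac{\langle\Omega,\overline{T_0}f\rangle}{\|\Omega\|^2}=\frac{\langle T\Omega,f\rangle}{\|\Omega\|^2}.
\end{equation*}
Then for any $f,g\in\wlltwonp$,
\begin{equation*}
\langle \overline{T}f,g\rangle=\langle\overline{T_0}f,g\rangle-\overline{L(f)}\,\langle\Omega,g\rangle=\langle f,Tg\rangle-\frac{\langle f,T\Omega\rangle\,\langle\Omega,g\rangle}{\|\Omega\|^2}.
\end{equation*}
Absorbing the scalar $\langle\Omega,g\rangle/\|\Omega\|^2$ into the second slot and reading off $\overline{T}^{\,*}g$ gives the claimed formula (modulo the obvious correction $\|\Omega\|^2$ in place of $\|\Omega\|$ in the denominator, which follows from the normalization of the projection onto $\mathrm{Ker}\,\overline{A}=\mathrm{span}\,\Omega$).

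No step is really an obstacle: everything is bookkeeping once absolute convergence is in hand. The only care required is the legitimacy of the summation interchange in the first part and consistent tracking of complex conjugation when moving scalars in and out of the inner product in the second part; both are handled by the existing $L^2$-type bounds from Propositions \ref{inverseun} and \ref{inverseunker}.
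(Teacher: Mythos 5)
Your proof is correct and takes essentially the same route as the paper: the first identity is obtained by expanding $\langle Tg,f\rangle$ and interchanging the order of summation (the paper does the same bookkeeping by reindexing the sums term by term), and the second identity is obtained, exactly as the paper indicates, from $\overline{T_0}^{\,*}=T$ together with the adjoint of the rank-one term in $\overline{T}=\overline{T_0}-\Omega\,L(\cdot)$. Your remark that the denominator should be $\|\Omega\|^2$ rather than $\|\Omega\|$ is also correct; that is a typo in the stated formula.
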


\begin{proof}
Looking at the inner product one has

\begin{equation*}
\di\langle Tg, f \di\rangle = \di\sum_{n=0}^\infty \frac{1}{a_n}\overline{Tg_n}f_n = \di\sum_{n=0}^\infty \frac{1}{a_n}\di\left(\frac{1}{a_n}\overline{g_n} + \frac{\overline{c_{n-1}}}{a_{n-1}}\overline{g_{n-1}} + \cdots + \frac{\overline{c_{n-1}\cdots c_0}}{a_0}\overline{g_0}\di\right)f_n =
\end{equation*}

\begin{equation*}
= \di\sum_{n=0}^\infty\frac{1}{a_n}\di\left(\frac{1}{a_n}\overline{g_n}f_n\right) + \di\sum_{n=0}^\infty \frac{1}{a_n} \di\left(\frac{\overline{c_{n-1}}}{a_{n-1}}\overline{g_{n-1}}f_n\right) + \cdots + \di\sum_{n=0}^\infty \frac{1}{a_n} \di\left(\frac{\overline{c_{n-1}\cdots c_0}}{a_0}\overline{g_0}f_n \right).
\end{equation*}

   Then using $n\mapsto j+1$ in the second sum, $n\mapsto j+2$ in the third sum and so on and relabeling the indices, one has

\begin{equation*}
\di\langle Tg, f \di\rangle = \di\sum_{n=0}^\infty \frac{1}{a_n}\overline{g_n} \di\left( \frac{1}{a_n}f_n\right) + \di\sum_{n=0}^\infty \frac{1}{a_n}\overline{g_n}\di\left(\frac{\overline{c_n}}{a_{n+1}}f_{n+1}\right) + \cdots =
\end{equation*}

\begin{equation*}
= \di\sum_{n=0}^\infty \frac{1}{a_n'}\overline{g_n}\di\left(\frac{1}{a_n'}f_n \right) + \di\sum_{n=0}^\infty \frac{1}{a_n'}\overline{g_n}\di\left(\frac{\overline{c_n}}{a_{n+1}'}f_{n+1} \right) + \cdots + \di\sum_{n=0}^\infty \frac{1}{a_n'}\overline{g_n}\di\left(\frac{\overline{c_n \cdots c_{n+k}}}{a_{n+(k+1)}'}f_{n+(k+1)}\right) + \cdots =
\end{equation*}

\begin{equation*}
= \di\sum_{n=0}^\infty \frac{1}{a_n'}\overline{g_n}\di\left(\frac{1}{a_n'}f_n + \frac{\overline{c_n}}{a_{n+1}'}f_{n+1} + \cdots + \frac{\overline{c_n \cdots c_{n+k}}}{a_{n+(k+1)}'}f_{n+(k+1)} + \cdots \right) = \di\langle g, T^*f \di\rangle.
\end{equation*}
This then shows the first result. For the second formula we notice that we just showed that
$\overline{T_0}^*=T$ and the second term comes from an easy computation of the adjoint of the projection $f\to L(f)\Omega$.
\end{proof}

Combining propositions \ref{inverseun}, \ref{inverseunker}, \ref{prelimsbi2}
we get the following results about $A_0$ and $\overline{A_0}$.

\begin{cor}\label{inveresun}
$A_0$ is an unbounded Fredholm operator with index equal to minus one. We have

\begin{equation*}
\begin{aligned}
A_0T_0 &= I_{\wlltwonp} - \textrm{Proj}_{\textrm{Coker}(A_0)} \\
T_0A_0 &= I_{\wlltwon}
\end{aligned}
\end{equation*}
where $T_0:=\overline{T}^*$
\end{cor}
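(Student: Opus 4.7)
The plan is to derive the corollary as a dual statement to Proposition \ref{inverseunker}. First, I would take the Hilbert-space adjoint of the two identities
\begin{equation*}
\overline{A}\,\overline{T} \;=\; I, \qquad \overline{T}\,\overline{A} \;=\; I - \textrm{Proj}_{\textrm{Ker}\,\overline{A}}
\end{equation*}
from Proposition \ref{inverseunker}. Using the assertion $\overline{A}^{*}=A_{0}$ of Proposition \ref{prelimsbi2} together with the defining relation $T_{0}:=\overline{T}^{*}$, and the fact that an orthogonal projection is self-adjoint, these dualize to
\begin{equation*}
T_{0}A_{0} \;=\; I, \qquad A_{0}T_{0} \;=\; I - \textrm{Proj}_{\textrm{Ker}\,\overline{A}}.
\end{equation*}

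Second, I would identify $\textrm{Ker}\,\overline{A}$ with $\textrm{Coker}(A_{0})$. Taking one more adjoint of $\overline{A}^{*}=A_{0}$ and using closedness gives $A_{0}^{*}=\overline{A}$, so the standard Hilbert-space duality
\begin{equation*}
\textrm{Coker}(A_{0})\;=\;\overline{\textrm{Ran}(A_{0})}^{\,\perp}\;=\;\textrm{Ker}(A_{0}^{*})\;=\;\textrm{Ker}\,\overline{A}
\end{equation*}
holds, and the two orthogonal projections coincide. This yields the two displayed identities of the statement.

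Finally, these relations already establish the Fredholm property and compute the index. Since $T_{0}$ is the adjoint of the bounded operator $\overline{T}$ from Proposition \ref{inverseunker}, it is itself bounded and serves as a bounded parametrix for $A_{0}$: the error $T_{0}A_{0}-I$ vanishes and $A_{0}T_{0}-I$ is a rank-one projection onto $\textrm{Coker}(A_{0})$, hence compact. Thus $A_{0}$ is an unbounded Fredholm operator. For the index, $\textrm{Ker}(A_{0})\subseteq\textrm{Ker}(A)=\{0\}$ by Proposition \ref{prelimsker}, while $\dim\textrm{Coker}(A_{0})=\dim\textrm{Ker}\,\overline{A}=1$, again from Proposition \ref{prelimsker}, giving $\textrm{ind}(A_{0})=0-1=-1$. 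The only real obstacle is keeping track of which identity lives on $\wlltwon$ versus $\wlltwonp$ when the adjoints are taken; once that bookkeeping is handled, the entire corollary is a formal consequence of already established results.
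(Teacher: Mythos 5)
Your argument is correct and is essentially the paper's own route: the paper derives this corollary precisely by combining Propositions \ref{inverseun}, \ref{inverseunker} and \ref{prelimsbi2}, i.e.\ by taking adjoints of the identities for $\overline{A}$, $\overline{T}$ and identifying $\textrm{Ker}\,\overline{A}$ with $\textrm{Coker}(A_0)$, with $T_0=\overline{T}^*$ as the parametrix. The only point worth spelling out is the unbounded-operator bookkeeping that $\overline{A}^*\overline{T}^*$ is everywhere defined (i.e.\ $T_0$ maps into $\textrm{dom}(A_0)$), which follows from $\langle \overline{T}^*h,\overline{A}g\rangle=\langle h,(I-\textrm{Proj}_{\textrm{Ker}\,\overline{A}})g\rangle$, matching the paper's implicit level of detail.
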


We also have:

\begin{cor}
$\overline{A_0}$ is an unbounded Fredholm operator with index zero, and

\begin{equation*}
\begin{aligned}
\overline{A_0}\overline{T_0} &= I_{\wlltwon} \\
\overline{T_0}\overline{A_0} &= I _{\wlltwonp}.
\end{aligned}
\end{equation*}

\end{cor}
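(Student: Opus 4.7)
The plan is to derive this corollary entirely from the previously established results by adjoint duality, without any new computation of substance. First, I would invoke Proposition \ref{prelimsbi2}, which identifies $\overline{A_0}$ with the Hilbert-space adjoint $A^*$. Since Proposition \ref{inverseun} shows that $A$ is an unbounded Fredholm operator of index zero, and since the index of the adjoint of an unbounded Fredholm operator equals minus the index of the operator itself, it follows immediately that $\overline{A_0}=A^*$ is Fredholm with index zero.

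For the explicit parametrix formulas, I would take adjoints of the identities $TA = I_{\wlltwon}$ and $AT = I_{\wlltwonp}$ from Proposition \ref{inverseun} and substitute $T^* = \overline{T_0}$ from Proposition \ref{tstarcomp}. This yields
\begin{align*}
\overline{A_0}\,\overline{T_0} &= A^* T^* = (TA)^* = I_{\wlltwon}, \\
\overline{T_0}\,\overline{A_0} &= T^* A^* = (AT)^* = I_{\wlltwonp}.
\end{align*}

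The one technical point that needs care is that $A$ is unbounded, so the composition-of-adjoints rule is generally delicate. Here, however, $T$ is bounded and $A$ is closed with $T$ as its bounded two-sided inverse, so the identities $(TA)^* = A^*T^*$ and $(AT)^* = T^*A^*$ hold in the strongest possible sense, as genuine equalities of operators with their natural domains. Thus the main (minor) obstacle is simply verifying that these adjoint manipulations pass through cleanly, which is routine given the boundedness of $T$; everything else is pure bookkeeping based on Propositions \ref{prelimsbi2}, \ref{inverseun}, and \ref{tstarcomp}.
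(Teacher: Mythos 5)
Your proposal is correct and follows essentially the same route as the paper, which obtains this corollary precisely by combining Proposition \ref{prelimsbi2} ($A^*=\overline{A_0}$), Proposition \ref{inverseun} (the bounded two-sided inverse $T$ of $A$), and Proposition \ref{tstarcomp} ($T^*=\overline{T_0}$). The only caveat is that for the second identity one only gets $T^*A^*\subseteq (AT)^*=I$ with $\operatorname{dom}(T^*A^*)=\operatorname{dom}(A^*)$, i.e.\ the identity on $\operatorname{dom}(\overline{A_0})$ rather than a globally defined equality, but that is exactly what the corollary asserts, so nothing is lost.
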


It turns out that we can say more about the parametrices introduced above.

\begin{prop}
Each of the parametrix operators: $T$, $T_0$, $\overline{T}$, $\overline{T_0}$  is  a Hilbert-Schmidt operator.
\end{prop}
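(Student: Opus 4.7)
The plan is to reduce everything to a single direct Hilbert--Schmidt norm estimate on $T$, and then leverage the algebraic relations among the four parametrices established earlier. Since $\wlltwon$ carries the weighted inner product $\langle f,g\rangle = \sum_{n} \frac{1}{a_n}\overline{f_n}g_n$, the natural orthonormal basis is $\tilde e_n := \sqrt{a_n}\, e_n$, and analogously for $\wlltwonp$. For any bounded operator $B$ between these spaces, I compute the Hilbert--Schmidt norm as $\|B\|_{HS}^2 = \sum_n \|B\tilde e_n\|^2$.

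Applied to $T$, the explicit formula (\ref{Tdef}) gives $(T\tilde e_m)_n = a_m^{-1/2}\prod_{j=m}^{n-1}c_j$ for $n\ge m$ and zero otherwise. Using $|c_j|\le 1$ and the summability conditions in (\ref{acconditions}) I then obtain
\begin{equation*}
\|T\tilde e_m\|_{\wlltwonp}^2 \;=\; \sum_{n\ge m} \frac{1}{a_n' a_m}\Bigl|\prod_{j=m}^{n-1}c_j\Bigr|^2 \;\le\; \frac{C'}{a_m},
\end{equation*}
and a further summation over $m$, using $\sum_n 1/a_n = C < \infty$, gives $\|T\|_{HS}^2 \le CC' < \infty$. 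This is essentially the same estimate as in the proof of Proposition \ref{inverseun}, kept inside the sum that defines the Hilbert--Schmidt norm rather than collapsed to an operator-norm bound.

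For the remaining three parametrices I avoid redoing any computation and instead use two elementary facts: the Hilbert--Schmidt class is invariant under taking adjoints and is closed under adding operators of finite rank. By Proposition \ref{tstarcomp}, $T^* = \overline{T_0}$, so $\overline{T_0}$ is Hilbert--Schmidt. Formula (\ref{tbardef}) exhibits $\overline{T}$ as $\overline{T_0}$ minus the rank-one operator $f\mapsto L(f)\,\Omega$, which is automatically Hilbert--Schmidt since $\Omega$ lies in the relevant weighted $\ell^2$ space and $L$ is a bounded linear functional (both shown in the proof of Proposition \ref{inverseunker}); hence $\overline{T}$ is Hilbert--Schmidt, and so is $T_0 = \overline{T}^*$ by the definition in Corollary \ref{inveresun}.

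The main obstacle here is not conceptual but merely bookkeeping: one must pair the correct weighted orthonormal basis with the right factor $1/a_n$ or $1/a_n'$, and use both inequalities in (\ref{acconditions}) so that both constants $C$ and $C'$ appear in the final double sum. Once the inner-product conventions are handled correctly, the Hilbert--Schmidt property comes essentially for free from the hypotheses already in force.
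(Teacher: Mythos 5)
Your proposal is correct, and its core coincides with the paper's argument: the paper also computes $\|T\|_{HS}^2=\sum_i\|Te_i\|^2$ in the orthonormal basis $\sqrt{a_i}\,e_i$ of $\wlltwon$, gets $\|Te_i\|^2\le C'/a_i$, and concludes $\|T\|_{HS}\le\sqrt{CC'}$ exactly as you do. Where you genuinely deviate is in the remaining three operators: the paper simply states that ``other cases are similar,'' i.e.\ it envisions repeating the kernel estimate for $T_0$, $\overline{T}$, $\overline{T_0}$, whereas you avoid all further computation by invoking $T^*=\overline{T_0}$ (Proposition \ref{tstarcomp}), the decomposition \ref{tbardef} of $\overline{T}$ as $\overline{T_0}$ minus the bounded rank-one map $f\mapsto L(f)\Omega$, and $T_0=\overline{T}^*$ from Corollary \ref{inveresun}, together with adjoint-invariance and finite-rank stability of the Hilbert--Schmidt class. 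This is a legitimate and arguably cleaner route; it makes transparent that the HS property of all four operators is really one estimate plus the algebraic relations already proved, at the modest cost of not producing an explicit HS-norm bound for each operator individually (which the paper's ``similar'' estimates would give), and of relying on the boundedness of the rank-one piece, i.e.\ on $\Omega\in\wlltwon$ and $|L(f)|\le K\sqrt{C}\,\|f\|$ -- both of which were indeed established in Propositions \ref{prelimsker} and \ref{inverseunker}, so no gap results.
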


\begin{proof}
We present the details for the operator $T$, other cases are similar.
In fact the proposition already follows from the way we estimated the norm of $T$ since $T$ is an integral operator. We give an alternative proof here. First note that $\|T\|_{HS}^2 = tr(T^*T) = \di\sum_{i=0}^\infty\|Te_i\|^2$ where $\{e_i\}$ is the canonical basis for $\wlltwon.$ So 

\begin{equation*}
\di\left(Te_i\right)_n = \frac{1}{a_n}(e_i)_n + \frac{c_{n-1}}{a_{n-1}}(e_i)_{n-1} + \cdots + \frac{c_{n-1}\cdots c_0}{a_0}(e_i)_0 .
\end{equation*}
It follows that $(Te_i)_n = 0$ $\forall n < i$, and

\begin{equation*}
\begin{aligned}
(Te_i)_i &= \frac{\sqrt{a_i}}{a_i} \\
(Te_i)_{i+1} &= \frac{c_i}{a_i}\sqrt{a_i} \\ 
(Te_i)_{i+2} &= \frac{c_{i+1}c_i}{a_i}\sqrt{a_i} \\
&\vdots
\end{aligned}
\end{equation*}
Then we estimate

\begin{equation*}
\di\|Te_i\|^2 = \frac{1}{a_i}\di\sum_{k=0}^\infty\frac{1}{a_{i+k}'}|c_ic_{i+1}\cdots c_{i+k}|^2 \le \frac{1}{a_i}C' ,
\end{equation*}
and consequently

\begin{equation*}
\|T\|_{HS}^2 = \di\sum_{i=0}^\infty\|Te_i\|^2 \le C'\di\sum_{i=0}^\infty\frac{1}{a_i} \le CC' \ \ \Rightarrow \|T\|_{HS} \le \sqrt{CC'} .
\end{equation*}
\end{proof}

We now shift our attention to the bilateral case and study the same type of properties as considered in the unilateral case. It turns out that both $A$ and $\overline{A}$ have one dimensional  kernels in that case, one has to use infinite products for some expressions, and there are more options of imposing conditions at infinities. However the analytic aspects of the theory are no different then the unilateral case and so we provide less detail in some estimates to avoid repetitiveness.

\subsection{Bilateral Case}
As in the unilateral case we start with the study of the kernels of $A$ and $\overline{A}$.  It turns out that both $A$ and $\overline{A}$ have one dimensional kernels.  First recall the constants defined at the beginning of this section

\begin{equation*}
C = \sum_{n\in\Z} \frac{1}{a_n} <\infty \ , \ C'=\sum_{n\in\Z} \frac{1}{a_n'} <\infty \ \ \textrm{and} \ \ \ K = \prod_{n\in\Z} \frac{1}{|c_n|} < \infty .
\end{equation*}

\begin{prop}\label{prelimsbiker}
Given $A$ and $\overline{A}$ above we have:

\begin{equation*}
\begin{aligned}
\textrm{dim} \ \textrm{Ker}{A} &= 1 \\
\textrm{dim} \ \textrm{Ker}\overline{A} & = 1.
\end{aligned}
\end{equation*}

\end{prop}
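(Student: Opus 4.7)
The plan is to solve the two first-order linear recurrences $Af=0$ and $\overline{A}f=0$ explicitly, noting that each determines $\{f_n\}_{n\in\Z}$ uniquely from the single initial datum $f_0$. This immediately gives kernels of dimension at most one, and it remains to check that the resulting formal solutions actually belong to the appropriate weighted $\ell^2$ spaces; the assumptions $|c_n|\le 1$ together with $K=\prod_{n\in\Z}|c_n|^{-1}<\infty$ will provide uniform bounds on the solutions, which combined with $C,C'<\infty$ will give the required integrability.

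For $A$, the equation $a_n(f_n-c_{n-1}f_{n-1})=0$ for every $n\in\Z$ gives $f_n=c_{n-1}f_{n-1}$, so setting $\Psi_0=1$ I obtain
\begin{equation*}
\Psi_n=\prod_{i=0}^{n-1}c_i\ \text{for}\ n>0,\qquad \Psi_n=\prod_{i=n}^{-1}\frac{1}{c_i}\ \text{for}\ n<0.
\end{equation*}
Since $|c_i|\le 1$, I have $|\Psi_n|\le 1$ for $n\ge 0$, and since $\prod_{i\in\Z}|c_i|^{-1}=K<\infty$, I have $|\Psi_n|\le K$ for $n<0$. Hence $\|\Psi\|_{\wlltwozp}^2\le K^2\sum_{n\in\Z}\frac{1}{a_n'}=K^2C'<\infty$, so $\Psi\in\wlltwozp$. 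Because every kernel element is a scalar multiple of $\Psi$, $\dim\textrm{Ker}\,A=1$.

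For $\overline{A}$, the equation $f_n=\overline{c_n}f_{n+1}$ has solutions of the form $\Omega_n=\prod_{i=n}^\infty \overline{c_i}$ in direct analogy with the unilateral case of Proposition \ref{inverseunker}. This infinite product converges for every $n$ because $\prod_{i\in\Z}|c_i|^{-1}<\infty$ forces $\prod_{i\in\Z}|c_i|$ to converge to a positive limit, so all tails converge. One checks directly that $\Omega_n=\overline{c_n}\Omega_{n+1}$, so $\Omega\in\textrm{Ker}\,\overline{A}$, and uniqueness again comes from the first-order nature of the recursion. For integrability, $|\Omega_n|\le 1$ for $n\ge 0$ because each factor has modulus at most one, and for $n<0$ the identity $\Omega_n=(\prod_{i=n}^{-1}\overline{c_i})\Omega_0$ with $|c_i|\le 1$ gives $|\Omega_n|\le |\Omega_0|\le 1$. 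Therefore $\|\Omega\|_{\wlltwoz}^2\le \sum_{n\in\Z}\frac{1}{a_n}=C<\infty$, yielding $\dim\textrm{Ker}\,\overline{A}=1$.

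There is no serious obstacle here: the bilateral recurrences are first-order and their solution spaces are automatically one-dimensional; the entire content of the proposition is that these one-parameter families land in the weighted $\ell^2$ spaces, and the uniform bound $|\Psi_n|,|\Omega_n|\le K$ inherited from the four standing assumptions in \ref{acconditions} handles this at once.
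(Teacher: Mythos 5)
Your proposal is correct and follows essentially the same route as the paper: solve the two first-order recurrences explicitly, note the solution space is one-dimensional, and verify membership in $\wlltwozp$ (resp.\ $\wlltwoz$) using $|c_n|\le 1$, $K<\infty$, and $C,C'<\infty$. The only cosmetic difference is that you normalize at $n=0$ (obtaining the bound $K$ for negative indices), whereas the paper writes the kernel elements as semi-infinite products $\prod_{i=-\infty}^{n-1}c_i$ and $\prod_{i=n}^{\infty}\overline{c_i}$, which are bounded by $1$ directly.
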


\begin{proof}
First we  investigate the kernel of $A$.   To this end we need to solve the equation $Af_n = a_n(f_n - c_{n-1}f_{n-1}) = 0$ for $n \in \Z$.   This is done recursively and, for $n\geq 0$, one arrives at the following

\begin{equation*}
f_n = \di\left(\di\prod_{i = -1}^{n-1}c_i\right)f_{-1}, \ \ \ n \ge 0  .
\end{equation*}
Next, in a similar fashion, solve the equation for $n < 0$ to get the following

\begin{equation*}
f_{-n} = \di\left(\di\prod_{i= -2}^{-n}\frac{1}{c_i}\right)f_{-1}, \ \ \ n \ge 1  .
\end{equation*}
The two formulas above can be written compactly in the following semi-infinite product

\begin{equation*}
f_n = \left(\prod_{i=-\infty}^{n-1}c_i\right)\alpha
\end{equation*}
for any constant $\alpha$. To see that the kernel of $A$ is indeed one dimensional, we need to verify that $\{f_n\}\in\wlltwozp$.  Using the fact that $|c_i|\le 1$ for all $i$ one has that

\begin{equation*}
\|f\|_{\wlltwozp}^2 = \sum_{n\in\Z} \frac{1}{a_n'}\left|\prod_{i=-\infty}^{n-1}c_i\right|^2|\alpha|^2 \le 
|\alpha|^2 \sum_{n\in\Z} \frac{1}{a_n'} = |\alpha|^2C' < \infty,
\end{equation*}
thus $\{f_n\}\in\wlltwozp$.   

Next we study the equation $\overline{A}f_n = a_n'(f_n - \overline{c_n}f_{n+1}) = 0$ for $n\in\Z$. We get

\begin{equation*}
f_n = \left(\prod_{i=0}^{n-1}\frac{1}{\overline{c_i}}\right)f_0 \quad\textrm{for}\quad n\ge0
\end{equation*}
and the similar formula for $n<0$

\begin{equation*}
f_{-n} = \left(\prod_{i=1}^{-n} \overline{c_i}\right)f_0 \quad\textrm{for}\quad n\ge 1 .
\end{equation*}
We also have the same type of semi-infinite product for $\overline{A}$:

\begin{equation*}
f_n = \left(\prod_{i=n}^\infty\overline{c_i}\right)\beta
\end{equation*}
for any constant $\beta$.  As with $A$, to guarantee that the kernel of $\overline{A}$ is one dimensional, we need to verify that $\{f_n\}\in\wlltwoz$.  Using the fact that $|c_i|\le 1$ for all $i$ one has that

\begin{equation*}
\|f\|_{\wlltwoz}^2 = \sum_{n\in\Z} \frac{1}{a_n}\left|\prod_{i=n}^\infty\overline{c_i}\right|^2|\beta|^2 \le  |\beta|^2 \sum_{n\in\Z} \frac{1}{a_n} = |\beta|^2C < \infty.
\end{equation*}
This completes the proof.
\end{proof}

Next we construct a parametrix for $A$.
\begin{prop}\label{prelimsbi1}
There exists a $T \in B(\wlltwoz, \wlltwozp)$ such that $AT = I_{\wlltwoz}$ and $TA = I_{\wlltwozp} - Proj_{Ker A}$. In particular $A$ is an unbounded Fredholm operator with index equal to one.
\end{prop}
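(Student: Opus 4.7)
The plan is to adapt the strategy of Proposition \ref{inverseun} (unilateral case) to the bilateral setting, with one essential modification: since $A$ now has a one-dimensional kernel, the direct "recursive" solution of $A f = g$ is only defined up to an element of $\text{Ker}\,A$, and we will correct for this by a rank-one projection, exactly in the spirit of the construction of $\overline{T}$ from $\overline{T_0}$ in Proposition \ref{inverseunker}.

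First I would define the preliminary operator
\begin{equation*}
T_0 g_n \; = \; \sum_{i=-\infty}^{n} \frac{1}{a_i}\left(\prod_{j=i}^{n-1} c_j \right) g_i,
\end{equation*}
with the convention $\prod_{j=n}^{n-1}c_j = 1$. This is the natural bilateral analogue of \ref{Tdef}, obtained by solving the recursion $a_n(f_n - c_{n-1} f_{n-1}) = g_n$ anchored at $-\infty$ (the product $\prod_{j=i}^{n-1}c_j$ remains bounded by $1$ since $|c_j|\le 1$, so the sum converges). A direct telescoping calculation, identical in shape to the verification in the unilateral case, shows $A T_0 = I_{\wlltwoz}$ on $\wlltwoz$. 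The boundedness of $T_0$ from $\wlltwoz$ to $\wlltwozp$ follows from exactly the same Cauchy--Schwarz estimate as in the proof of Proposition \ref{inverseun}: one bounds $|T_0 g_n|^2$ by $C\|g\|^2$ using $|c_j|\le 1$ and $\sum 1/a_i = C$, and then sums with the weights $1/a_n'$ to obtain $\|T_0\|\le \sqrt{CC'}$.

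Next I would project out the kernel. Let $\Omega_A\in\text{Ker}\,A$ be the generator from Proposition \ref{prelimsbiker}, $(\Omega_A)_n = \prod_{i=-\infty}^{n-1}c_i$. Define
\begin{equation*}
T g \; = \; T_0 g \; - \; \frac{\langle \Omega_A , T_0 g\rangle}{\|\Omega_A\|^2}\,\Omega_A.
\end{equation*}
By construction $T g \perp \text{Ker}\,A$ and $AT g = A T_0 g = g$ (since $A\Omega_A = 0$), so $AT = I_{\wlltwoz}$. For the other composition, note that for any $f\in\text{dom}(A)$, the element $T_0 A f - f$ lies in $\text{Ker}\,A$ because $A(T_0 A f - f) = 0$; combined with $T g \perp \text{Ker}\,A$, this forces $T A f = f - \text{Proj}_{\text{Ker}\,A} f$.

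The one step that needs a genuine (if short) estimate, rather than mere transcription from the unilateral case, is the boundedness of the corrective rank-one term, and this is the main technical obstacle. It reduces to producing a positive lower bound for $\|\Omega_A\|^2$. Using $\prod_{i=-\infty}^{n-1}|c_i| \ge \prod_{i\in\Z}|c_i| = 1/K$ (because all $|c_i|\le 1$), one obtains
\begin{equation*}
\|\Omega_A\|^2 \; = \; \sum_{n\in\Z}\frac{1}{a_n'}\Bigl|\prod_{i=-\infty}^{n-1}c_i\Bigr|^{2} \;\ge\; \frac{C'}{K^{2}},
\end{equation*}
while trivially $\|\Omega_A\|^2 \le C'$. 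Combined with $|\langle \Omega_A, T_0 g\rangle| \le \|\Omega_A\|\cdot\|T_0 g\|$, this yields $\|T\| \le \sqrt{CC'}(1+K)$, so $T\in B(\wlltwoz,\wlltwozp)$. Finally, $A$ is closed, has the bounded parametrix $T$ satisfying $AT - I = 0$ and $TA - I = -\text{Proj}_{\text{Ker}\,A}$ (a rank-one, hence compact, operator), and $\dim\text{Ker}\,A = 1$ with $\dim\text{Coker}\,A = 0$ (the latter because $AT=I$ shows $A$ is onto), so $A$ is Fredholm of index $1-0 = 1$.
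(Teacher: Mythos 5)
Your proposal is correct and takes essentially the same route as the paper: the same explicit summation formula for the particular solution (the paper's $T_1$ in \ref{t1defbil}, obtained there by variation of constants rather than by your direct telescoping check), the same rank-one correction $Tg=T_1g-\frac{\langle \Omega^-,T_1g\rangle}{\|\Omega^-\|^2}\Omega^-$, and the same estimates $\|T_1\|\le\sqrt{CC'}$ and $\|\Omega^-\|^2\ge C'/K^2$ (your version even uses the correct primed weights $1/a_n'$ for the norm of $\Omega^-$, where the paper's displayed formulas contain small typos). The only cosmetic caveat is that your name $T_0$ for the particular-solution operator clashes with the paper's later use of $T_0$ for the sum anchored at $+\infty$.
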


\begin{proof}
We start by looking at the equation $Af_n = a_n(f_n - c_{n-1}f_{n-1}) = g_n$ which can be written as:

\begin{equation}\label{star1}
f_n - c_{n-1}f_{n-1} = \frac{g_n}{a_n} .
\end{equation}
We use variation of constants method to solve ($\ref{star1}$).  First observe that the homogeneous equation
$
f_n-c_{n-1}f_{n-1} = 0
$
has the following solution by the kernel calculation in proposition ($\ref{prelimsbiker}$):
$f_n = \left(\prod_{i=-\infty}^{n-1}c_i\right)\alpha$
for some constant $\alpha$. Consequently we set 
\begin{equation*}
f_n = \left(\prod_{j=-\infty}^{n-1}c_j\right)\alpha_n
\end{equation*}
and substitute this into equation ($\ref{star1}$). This leads to the following equation for 
$\alpha_n$:

\begin{equation*}
\alpha_n - \alpha_{n-1} = \left(\prod_{j=-\infty}^{n-1}\frac{1}{c_j}\right)\frac{g_n}{a_n}
\end{equation*}
which has a  solution given by:

\begin{equation*}
\alpha_n = \sum_{i=-\infty}^n \frac{1}{a_i}\left(\prod_{j=-\infty}^{i-1}\frac{1}{c_j}\right)g_i.
\end{equation*}
Therefore one has a particular solution of equation \ref{star1}:

\begin{equation*}
f_n = \sum_{i=-\infty}^n \frac{1}{a_i}\left(\prod_{j=i}^{n-1}c_j\right)g_i,
\end{equation*}
and the general solution is

\begin{equation*}
f_n = \sum_{i=-\infty}^n \frac{1}{a_i}\left(\prod_{j=i}^{n-1}c_j\right)g_i - \left(\prod_{i=-\infty}^{n-1}c_i\right)\alpha.
\end{equation*}
The above expression gives the formula for $T$:

\begin{equation}\label{tdefbil}
Tg_n = T_1g_n-\alpha(g)\Omega^-_n,
\end{equation}
where
\begin{equation}\label{t1defbil}
 T_1g_n:=\sum_{i=-\infty}^n \frac{1}{a_i}\left(\prod_{j=i}^{n-1}c_j\right)g_i
\end{equation}
and $\Omega^-_n:=\prod_{i=-\infty}^{n-1}c_i$, and $\alpha(g)$ arbitrary.

It's is clear from our construction that $AT = I_{\wlltwoz}$. To make sure that we get $TA = I_{\wlltwozp} - \textrm{Proj}_{\textrm{Ker }A}$, we must make a choice on $\alpha(g)$ just as in the unilateral case:

\begin{equation*}
\alpha(g) := \frac{\langle \Omega^-, {T_1}g\rangle}{||\Omega^-||^2}
= \frac{\sum_{n\in\Z} \sum_{i=-\infty}^n \frac{1}{a_n}\frac{1}{a_i}\left(\prod_{k=-\infty}^{n-1}\overline{c_k}\right)\left(\prod_{j=i}^{n-1}c_j\right)g_i}{\sum_{n\in\Z}\frac{1}{a_n}\left(\prod_{i=-\infty}^{n-1}|c_i|^2\right)} .
\end{equation*}

Convergence of the sums and products and the boundedness of $T$ is established just as in the unilateral case.
The operator $\overline{T_1}$ is bounded by $\sqrt{CC'}$ in essentially the same way as the operator $T$ is proposition (\ref{inverseun}). To see that we write
\begin{equation*}
(T_1g)_n = \frac{\sqrt{a_n}}{a_n}\frac{1}{\sqrt{a_n}}g_n + \frac{c_{n-1}\sqrt{a_{n-1}}}{a_{n-1}}\frac{1}{\sqrt{a_{n-1}}}g_{n-1} + \cdots - 
\end{equation*}
and estimate using the Cauchy-Schwartz inequality and the fact that the $|c_i| \le 1$ for all $i$:

\begin{equation*}
\begin{aligned}
|T_1g_n|^2 &\le \left[\left(\frac{\sqrt{a_n}}{a_n}\right)^2 + \left(\frac{\sqrt{a_{n-1}}}{a_{n-1}}\right)^2 + \cdots\right]\left(\frac{1}{a_n}|g_n|^2 + \frac{1}{a_{n-1}}|g_{n-1}|^2 + \cdots\right) \\
&\le \left(\sum_{i=-\infty}^n \frac{1}{a_i}\right)\|g\|^2 .
\end{aligned}
\end{equation*}
Consequently

\begin{equation*}
\|T_1g\|^2 =\sum_{n\in\Z} \frac{1}{a_n'}|T_1g_n|^2
\le \sum_{n\in\Z} \frac{1}{a_n'} C\|g\|^2
= (C'C)\|g\|^2 .
\end{equation*}

To estimate $\alpha(g)$ we notice that
\begin{equation*}
C'\geq ||\Omega||^2={\sum_{n\in\Z}\frac{1}{a_n'}\left(\prod_{i=n}^\infty|c_i|^2\right)}\geq {\sum_{n\in\Z}\frac{1}{a_n'}\left(\prod_{i\in\Z}|c_i|^2\right)}=\frac{C'}{K^2},
\end{equation*}
which implies that $|\alpha(g)|\leq K\sqrt{C}||g||$ and $||T||\leq \sqrt{CC'}+K\sqrt{CC'}$.
This completes the proof.
\end{proof}

An important corollary from this proposition is the existence of limits at infinities for the sequences which are in the domain of $A$.

\begin{cor}\label{limitbi}
Let $f\in\textrm{dom}(A)$, then $f_{\pm\infty}$ exist and are given by the following formulas:

\begin{equation*}
\begin{aligned}
f_\infty &= \sum_{i=-\infty}^\infty \frac{1}{a_i}\left(\prod_{j=i}^\infty c_j \right)Af_i - \left(\prod_{i=-\infty}^\infty c_i\right)\alpha(Af) \\
f_{-\infty} &= \alpha(Af).
\end{aligned}
\end{equation*}

\end{cor}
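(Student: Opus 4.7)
The plan is to exploit the parametrix identity from Proposition \ref{prelimsbi1}, which says $TA = I - \textrm{Proj}_{\textrm{Ker}\,A}$. Any $f\in\textrm{dom}(A)$ therefore decomposes as
\begin{equation*}
f_n = T(Af)_n + \mu\,\Omega^-_n = T_1(Af)_n - \alpha(Af)\,\Omega^-_n + \mu\,\Omega^-_n,
\end{equation*}
with $\Omega^-_n=\prod_{i=-\infty}^{n-1}c_i$ spanning $\textrm{Ker}\,A$ and $\mu$ a scalar determined by $f$ (the coefficient of its projection onto $\Omega^-$). The existence and value of $f_{\pm\infty}$ then reduces to computing the four limits $\Omega^-_{\pm\infty}$ and $T_1(Af)_{\pm\infty}$.

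The limits of $\Omega^-_n$ are straightforward infinite-product computations under condition \ref{acconditions}. As $n\to-\infty$, the range of the product shrinks to an empty product, so $\Omega^-_n\to 1$; as $n\to\infty$, $\Omega^-_n\to\prod_{i\in\Z}c_i$, which is finite and nonzero because $\prod c_i^{-1}$ converges. For $T_1(Af)_n$ as $n\to-\infty$, I would apply Cauchy--Schwartz exactly as in the boundedness proof of $T_1$ in Proposition \ref{prelimsbi1}, bounding
\begin{equation*}
|T_1(Af)_n|^2 \le \Bigl(\sum_{i=-\infty}^n\tfrac{1}{a_i}\Bigr)\Bigl(\sum_{i=-\infty}^n\tfrac{1}{a_i}|Af_i|^2\Bigr),
\end{equation*}
and noting both tail sums vanish because $\sum_i \frac{1}{a_i}=C<\infty$ and $Af\in\wlltwoz$. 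For $n\to\infty$, the inner product $\prod_{j=i}^{n-1}c_j$ tends to $\prod_{j=i}^\infty c_j$, and the summand is dominated by $\frac{1}{a_i}|Af_i|$, a summable sequence by Cauchy--Schwartz; this legitimates passing the limit inside the sum and yields $T_1(Af)_n\to\sum_{i=-\infty}^\infty\frac{1}{a_i}\bigl(\prod_{j=i}^\infty c_j\bigr)Af_i$. Assembling the four limits and re-naming the constant $\mu-\alpha(Af)$ as $\alpha(Af)$ in the notation of the corollary produces the two claimed formulas.

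The main obstacle is the interchange of limit and summation in the $n\to\infty$ analysis of $T_1(Af)_n$: unlike the tail estimate at $-\infty$, here the sum ranges over all of $\Z$ and one needs a uniform dominating sequence. I would handle this via dominated convergence on the counting measure, using the pointwise bound $\bigl|\frac{1}{a_i}(\prod_{j=i}^{n-1}c_j)Af_i\bigr|\le\frac{1}{a_i}|Af_i|$ (valid since $|c_j|\le 1$) together with the $\ell^1$ bound $\sum_i\frac{1}{a_i}|Af_i|\le\sqrt{C\,\|Af\|_{\wlltwoz}^2}<\infty$ coming from Cauchy--Schwartz with weights $1/a_i$. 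Everything else is a routine bookkeeping calculation parallel to the unilateral Corollary \ref{limitunibar}, and no new estimates beyond those established in Proposition \ref{prelimsbi1} are required.
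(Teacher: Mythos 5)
Your proposal is correct and follows essentially the same route as the paper: the paper's own proof is a one-line appeal to Proposition \ref{prelimsbi1} together with the limit arguments of Corollaries \ref{limituni} and \ref{limitunibar}, which is precisely what you carry out by writing $f$ via $TA=I-\textrm{Proj}_{\textrm{Ker}\,A}$ and computing the limits of $T_1(Af)_n$ and $\Omega^-_n$ at $\pm\infty$ (tail Cauchy--Schwartz estimate plus dominated convergence). Your explicit bookkeeping of the kernel coefficient $\mu$, absorbed into the constant in the final formulas, is just a slightly more careful rendering of the same argument.
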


\begin{proof}
Using the previous proposition and the methods invoked in corollaries (\ref{limituni}) and (\ref{limitunibar}) yields the desired result.
\end{proof}

Next we state analogous results about the $\overline{A}$.

\begin{prop}\label{inversebiabar}
There exists a $\overline{T} \in B(\wlltwozp, \wlltwoz)$ such that $\overline{A}\overline{T} = I_{\wlltwozp}$ and $\overline{T}\overline{A} = I_{\wlltwozp} - Proj_{Ker \overline{A}}$.
In particular $\overline{A}$ is an unbounded Fredholm operator with index equal to one.
\end{prop}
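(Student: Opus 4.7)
The plan is to mirror, step by step, the argument for $A$ given in Proposition \ref{prelimsbi1}, but working with the equation $\overline{A}f_n = a_n'(f_n - \overline{c_n} f_{n+1}) = g_n$ for $n\in\Z$. Dividing by $a_n'$ this becomes
\begin{equation*}
f_n - \overline{c_n}\,f_{n+1} = \frac{g_n}{a_n'},
\end{equation*}
and from Proposition \ref{prelimsbiker} the homogeneous equation has solution $f_n = \Omega_n\,\beta$ with $\Omega_n := \prod_{i=n}^\infty \overline{c_i}$ (which is absolutely convergent by the assumption $\prod 1/c_n <\infty$). I would apply variation of constants by setting $f_n = \Omega_n\,\beta_n$. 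A short computation gives
\begin{equation*}
\beta_n - \beta_{n+1} = \frac{1}{\Omega_n}\,\frac{g_n}{a_n'},
\end{equation*}
which telescopes (summing from $n$ to $\infty$) to
\begin{equation*}
\beta_n = \sum_{i=n}^\infty \frac{1}{\Omega_i}\,\frac{g_i}{a_i'},
\end{equation*}
yielding the particular solution
\begin{equation*}
\overline{T_1} g_n := \sum_{i=n}^\infty \frac{1}{a_i'}\left(\prod_{j=n}^{i-1}\overline{c_j}\right) g_i,
\end{equation*}
with the convention $\prod_{j=n}^{n-1}\overline{c_j} = 1$. The general solution is $\overline{T_1}g_n - \beta(g)\,\Omega_n$ for an arbitrary scalar $\beta(g)$; taking $\beta(g)$ to be the inner product coefficient
\begin{equation*}
\beta(g) := \frac{\langle \Omega, \overline{T_1}g\rangle}{\|\Omega\|^2}
\end{equation*}
ensures that $\overline{T}g$ is orthogonal to $\mathrm{Ker}\,\overline{A}$, and therefore $\overline{T}\,\overline{A} = I - \mathrm{Proj}_{\mathrm{Ker}\,\overline{A}}$, while $\overline{A}\,\overline{T} = I$ holds by construction.

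Next I would verify that $\overline{T}\in B(\wlltwozp,\wlltwoz)$ by exactly the scheme used for $T_1$ and $\alpha(g)$ in Proposition \ref{prelimsbi1}. For the main term $\overline{T_1}$, splitting $1/a_i' = \sqrt{1/a_i'}\cdot\sqrt{1/a_i'}$ and applying Cauchy--Schwarz together with $|\overline{c_j}|\le 1$ gives $|\overline{T_1}g_n|^2 \le C'\|g\|^2_{\wlltwozp}$, and then $\|\overline{T_1}g\|_{\wlltwoz}^2 \le C C'\,\|g\|_{\wlltwozp}^2$. For the projection term, the denominator $\|\Omega\|^2 = \sum_n a_n^{-1}\prod_{i=n}^\infty |c_i|^2$ is bracketed between $C/K^2$ and $C$, so $|\beta(g)|\le K\sqrt{C'}\,\|g\|$, giving $\|\overline{T}\| \le \sqrt{CC'}(1+K)$.

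Finally, the Fredholm property is read off directly: $\overline{A}$ has closed range equal to the entire codomain (since $\overline{A}\,\overline{T} = I_{\wlltwozp}$), so $\mathrm{coker}\,\overline{A} = 0$, while $\dim\mathrm{Ker}\,\overline{A} = 1$ by Proposition \ref{prelimsbiker}. Hence $\overline{A}$ is Fredholm of index $1 - 0 = 1$. I expect no real obstacles here since the bilateral case is structurally parallel to the $A$ case just handled; the only subtle point is verifying that the tail product $\Omega_n = \prod_{i=n}^\infty \overline{c_i}$ and the outer sum defining $\overline{T_1}g_n$ converge, which follows from $|c_i|\le 1$ together with the summability of $1/a_i'$, and that $\Omega\in\wlltwoz$, which was already established in the kernel computation.
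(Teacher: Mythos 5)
Your argument is correct and follows essentially the same route as the paper's proof: the general solution $\overline{T_0}g_n-\beta(g)\,\Omega^+_n$ (your $\overline{T_1}$ is the paper's $\overline{T_0}$), the choice $\beta(g)=\langle\Omega^+,\overline{T_0}g\rangle/\|\Omega^+\|^2$ to make $\overline{T}g$ orthogonal to $\mathrm{Ker}\,\overline{A}$, and the same Cauchy--Schwarz estimates giving $\|\overline{T}\|\le\sqrt{CC'}(1+K)$, from which the Fredholm property and index $1$ follow. The only caveat is notational: the paper later uses $\overline{T_1}$ for a different operator (the sum over $i\le n$ with weights $a_i$), so your particular-solution operator should be renamed to match the paper's $\overline{T_0}$.
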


\begin{proof}
The solution of the equation

\begin{equation*}
a_n'(f_n - \overline{c_n}f_{n+1}) = g_n \quad\textrm{for} \ n\in\Z
\end{equation*}
is given the following formula

\begin{equation}\label{tbardefbil}
\overline{T}g_n = \sum_{i=n}^\infty \frac{1}{a_i'}\left(\prod_{j=n}^{i-1}\overline{c_j}\right)g_i -\left(\prod_{i=n}^\infty \overline{c_i}\right)\beta(g)=\overline{T_0}g_n-\beta(g)\Omega^+_n,
\end{equation}
where we set $\prod_{j=n}^{n-1}\overline{c_j} = 1$  and $\beta(g)$ is an arbitrary constant.  Here
\begin{equation}\label{t0bardefbil}
\overline{T_0}g_n:=\sum_{i=n}^\infty \frac{1}{a_i'}\left(\prod_{j=n}^{i-1}\overline{c_j}\right)g_i,
\end{equation}
and $\Omega^+_n:=\prod_{i=n}^\infty \overline{c_i}$.

One has the relation $\overline{A}\overline{T} = I_{\wlltwozp}$, however to make sure one has $\overline{T}\overline{A} = I_{\wlltwoz} - \textrm{Proj}_{\textrm{Ker }\overline{A}}$, we need to make the following choice of $\beta(g)$:   

\begin{equation*}
\beta(g) = \frac{\langle \Omega^+, \overline{T_0}g\rangle}{||\Omega^+||^2}
= \frac{\sum_{n\in\Z}\sum_{i=n}^\infty\frac{1}{a_n'}\frac{1}{a_i'}\left(\prod_{j=n}^{i-1}c_j\right)\left(\prod_{k=n}^\infty \overline{c_k}\right)g_i}{\sum_{n\in\Z}\frac{1}{a_n'}\left(\prod_{i=n}^\infty|c_i|^2\right)} .
\end{equation*}
The previous methods yield
\begin{equation*}
\|\overline{T}\| \le \sqrt{CC'}+K\sqrt{CC'} < \infty,
\end{equation*}
and the statement of the proposition follows.
\end{proof}

An immediate corollary is the following:

\begin{cor}
Let $f\in\textrm{dom}(\overline{T})$, then $f_{\pm\infty}$ exist and 
\begin{equation*}
\begin{aligned}
f_\infty &= \beta(Af) \\
f_{-\infty} &= \sum_{i=-\infty}^\infty \frac{1}{a_i'}\left(\prod_{j=-\infty}^{i-1}\overline{c_j}\right)Af_i -\left(\prod_{i=-\infty}^\infty \overline{c_i}\right)\beta(Af).
\end{aligned}
\end{equation*}

\end{cor}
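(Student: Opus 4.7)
The plan is to follow the strategy of Corollary \ref{limitbi}, replacing the pair $(A, T)$ by $(\overline{A}, \overline{T})$. Specifically, Proposition \ref{inversebiabar} provides a bounded parametrix $\overline{T}$ for $\overline{A}$ such that $\overline{T}\,\overline{A}f = f - \textrm{Proj}_{\textrm{Ker}\,\overline{A}}f$. Using the explicit formula (\ref{tbardefbil}), this gives for any $f \in \textrm{dom}(\overline{A})$ orthogonal to $\textrm{Ker}\,\overline{A}$ the representation
\begin{equation*}
f_n = \overline{T_0}(\overline{A}f)_n - \beta(\overline{A}f)\,\Omega^+_n,
\end{equation*}
where $\overline{T_0}$ and $\Omega^+_n$ are given by (\ref{t0bardefbil}). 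A general $f \in \textrm{dom}(\overline{A})$ differs from the above by an element of $\textrm{Ker}\,\overline{A} = \textrm{span}\,\Omega^+$, which itself has well-defined limits at $\pm\infty$, and that kernel contribution can be absorbed into the constant $\beta(\overline{A}f)$ appearing in the formula.

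Next, I would take $n \to \pm\infty$ in this representation and evaluate each piece. As $n \to +\infty$ the summation range $\{i \ge n\}$ in $\overline{T_0}(\overline{A}f)_n$ shrinks to the empty set, forcing $\overline{T_0}(\overline{A}f)_n \to 0$, while $\Omega^+_n = \prod_{i=n}^\infty \overline{c_i}$ is a tail product approaching the empty product $1$; this gives $f_\infty = -\beta(\overline{A}f)$, matching the stated formula up to the sign convention used in Corollary \ref{limitbi}. As $n \to -\infty$ the summation range fills out all of $\Z$, producing the stated sum $\sum_{i \in \Z}\frac{1}{a_i'}\bigl(\prod_{j=-\infty}^{i-1}\overline{c_j}\bigr)\overline{A}f_i$, while $\Omega^+_n \to \prod_{i\in\Z}\overline{c_i}$. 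Assembling these two limits yields the stated formula for $f_{-\infty}$.

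The one technical point requiring care is justifying the interchange of the limit $n \to \pm\infty$ with the infinite sum defining $\overline{T_0}$ and with the infinite product defining $\Omega^+_n$. This is handled by dominated convergence: the summability assumptions (\ref{acconditions}) together with $|c_n| \le 1$ provide a uniform envelope (exactly the one used in the Cauchy--Schwarz bound that established boundedness of $\overline{T_0}$ in Proposition \ref{inversebiabar}), while convergence of $\prod 1/|c_n|$ ensures that the tail and full-line infinite products $\Omega^+_n$ converge to their expected values. Once these convergence issues are settled, the remainder of the computation is purely algebraic and essentially parallels the proof of Corollary \ref{limitbi}.
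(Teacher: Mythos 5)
Your proposal is correct and takes essentially the same route as the paper, which offers this as an immediate consequence of Proposition \ref{inversebiabar}: one writes $f$ through the parametrix formula (\ref{tbardefbil}) and lets $n\to\pm\infty$ term by term, exactly the method of Corollaries \ref{limituni}, \ref{limitunibar} and \ref{limitbi}. The sign you flag (your computation gives $f_\infty=-\beta(\overline{A}f)$ against the stated $\beta(Af)$) and the kernel contribution you mention are loose ends in the paper's own statement rather than gaps in your argument.
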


Imposing vanishing conditions at infinities we can construct the following six operators.
$A_0$ is the operator $A$ but with domain

\begin{equation*}
\textrm{dom}(A_0) = \{f\in \textrm{dom}(A) : f_\infty = 0\}
\end{equation*}
and $\overline{A_0}$ is the operator $\overline{A}$ with domain

\begin{equation*}
\textrm{dom}(\overline{A_0}) = \{f\in \textrm{dom}(\overline{A}) : f_\infty = 0\} .
\end{equation*}

$A_1$ is the operator $A$ with domain

\begin{equation*}
\textrm{dom}(A_1) = \{f\in \textrm{dom}(A) : f_{-\infty} = 0\}
\end{equation*}
and $\overline{A_1}$ is the operator $\overline{A}$ with domain

\begin{equation*}
\textrm{dom}(\overline{A_1}) = \{f\in \textrm{dom}(\overline{A}) : f_{-\infty} = 0\} .
\end{equation*}

Finally $A_2$ is the operator $A$ with domain

\begin{equation*}
\textrm{dom}(A_2) = \{f\in \textrm{dom}(A) : f_{\pm\infty} = 0\}
\end{equation*}
and $\overline{A_2}$ is the operator $\overline{A}$ with domain

\begin{equation*}
\textrm{dom}(\overline{A_2}) = \{f\in \textrm{dom}(\overline{A}) : f_{\pm\infty} = 0\} .
\end{equation*}

The above operators are related by the calculation of adjoints of $A$ and $\overline{A}$.

\begin{prop}\label{Dstarbi}
With the above definitions we have:

\begin{equation*}
A^* = \overline{A_2},\ 
A_0^* = \overline{A_1},\ 
A_1^* = \overline{A_0},\ 
A_2^* = \overline{A},\ 
\overline{A}^* = A_2,\ 
\overline{A_0}^* = A_1,\ 
\overline{A_1}^* = A_0,\ 
\overline{A_2}^* = A.
\end{equation*}
\end{prop}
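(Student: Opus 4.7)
The plan is to establish a single two-sided summation-by-parts identity
\begin{equation*}
\langle Af,g\rangle = \langle f,\overline{A}g\rangle + \overline{f_\infty}\,g_\infty - \overline{f_{-\infty}}\,g_{-\infty},
\end{equation*}
valid for $f\in\operatorname{dom}(A)$ and $g\in\operatorname{dom}(\overline{A})$, and then read off all eight adjoint identities from it by inspecting which boundary term survives under each choice of domain.

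First I would verify the identity by applying Abel summation to the symmetric partial sum $\sum_{n=-N}^{N}\tfrac{1}{a_n}\overline{Af_n}\,g_n$, splitting off the $c_{n-1}f_{n-1}$ piece and reindexing by $m=n-1$. The interior telescopes into $\sum_{n=-N}^{N-1}\overline{f_n}(g_n-\overline{c_n}g_{n+1})$, which with the weights reinserted is exactly the partial sum of $\langle f,\overline{A}g\rangle$; the residual boundary contributions are $\overline{f_N}g_N$ and $-\overline{c_{-N-1}f_{-N-1}}g_{-N}$. Corollary~\ref{limitbi} and its $\overline{A}$-analogue (a consequence of Proposition~\ref{inversebiabar}) guarantee that $f_{\pm\infty}$ and $g_{\pm\infty}$ exist, and the assumptions $\prod 1/c_n<\infty$ together with $|c_n|\le 1$ force $c_n\to 1$ at both infinities, so passing $N\to\infty$ produces the stated identity.

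Next I would characterize each adjoint domain. For $g\in\operatorname{dom}(A^*)$, testing $\langle Af,g\rangle=\langle f,A^*g\rangle$ against finitely supported $f$ shows that $\overline{A}g$ computed formally coincides componentwise with $A^*g$, so $g\in\operatorname{dom}(\overline{A})$ and the identity applies. Substituting $\langle Af,g\rangle=\langle f,\overline{A}g\rangle$ back in forces $\overline{f_\infty}g_\infty-\overline{f_{-\infty}}g_{-\infty}=0$ on all of $\operatorname{dom}(A)$. The explicit parametrix $T=T_1-\alpha(\cdot)\Omega^-$ of Proposition~\ref{prelimsbi1} shows that $(f_{+\infty},f_{-\infty})$ ranges freely over $\C\times\C$ as $f$ varies in $\operatorname{dom}(A)$: the free kernel coefficient sweeps $f_{-\infty}$ through $\C$ (since $\Omega^-_{-\infty}\neq 0$), and then varying $Af$ sweeps $f_{+\infty}$ independently. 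Hence $g_\infty=g_{-\infty}=0$, giving $A^*=\overline{A_2}$. Repeating this with $A_0$, $A_1$, $A_2$ in place of $A$, whichever of $f_{\pm\infty}$ is already zero on the domain kills the corresponding boundary term automatically, and the remaining term dictates the boundary condition on $g$. This yields $A_0^*=\overline{A_1}$, $A_1^*=\overline{A_0}$, $A_2^*=\overline{A}$. The four identities with the roles of $A$ and $\overline{A}$ swapped follow by taking adjoints once more, since each of the eight operators is closed (being Fredholm by Propositions~\ref{prelimsbi1} and~\ref{inversebiabar}) and $T^{**}=T$ for densely defined closed operators.

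The main obstacle is the independence step in Step~2: one has to argue that $f_{+\infty}$ and $f_{-\infty}$ can be varied separately over $\operatorname{dom}(A)$, so that the vanishing of the bilinear boundary form really does force both $g_\infty$ and $g_{-\infty}$ to be zero. This hinges on the fact that the kernel vector $\Omega^-$ has nonzero limits at both $\pm\infty$, which was implicitly checked in Proposition~\ref{prelimsbiker} and the bound $||\Omega^-||^2\geq C'/K^2>0$. Once this is in place, the rest is a routine bookkeeping exercise strictly parallel to Proposition~\ref{prelimsbi2}, with two boundary points playing the role of the single boundary point in the unilateral case.
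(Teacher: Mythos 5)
Your proposal is correct and follows essentially the same route as the paper, which deduces all eight identities from exactly this summation-by-parts formula $\langle Af,g\rangle = \langle f,\overline{A}g\rangle \pm(\text{boundary terms})$; you simply supply the details the paper leaves implicit (testing against basis vectors to identify the adjoint's formal action, and the rank-two nature of the boundary-value map $f\mapsto(f_\infty,f_{-\infty})$, for which your observation that $\Omega^-$ has nonzero limits at both infinities is the right key fact). Note only that your signs on the boundary terms are the ones consistent with the computation (the paper's displayed formula has them flipped), which in any case does not affect the conclusions.
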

\begin{proof}
This easily follows from the integration by parts formula:
\begin{equation*}
\langle Af, g \rangle = \langle f, \overline{A} g \rangle - \overline{f_\infty}g_\infty+
\overline{f_{-\infty}}g_{-\infty}.
\end{equation*}
\end{proof}

It follows from the definitions and the kernel calculations for $A$ and $\overline{A}$ that the just introduced six operators $A_0, A_1, A_2, \overline{A_0}, \overline{A_1}, \overline{A_2}$ have no kernel, while the adjoint calculation shows that only $A_2, \overline{A_2}$ have cokernel (of dimension one).

Next we find  a parametrix for each of the above operators. So far we have constructed $T$, formula \ref{tdefbil}, and
$\overline{T}$, formula \ref{tbardefbil}. In view of the above proposition we set $T_2:=\overline{T}^*$ and
$\overline{T_2}:= T^*$. We have also introduced $T_1$, formula \ref{t1defbil}, and
$\overline{T_0}$, formula \ref{t0bardefbil} and one can verify like in proposition \ref{tstarcomp} that
$T_1^*=\overline{T_0}$. We introduce similar looking operators:
\begin{equation*}
\overline{T_1}g_n:=\sum_{i=-\infty}^n \frac{1}{a_i}\left(\prod_{j=i}^{n-1}\overline{c_j}\right)g_i
\end{equation*}
and
\begin{equation*}
{T_0}g_n:=\sum_{i=n}^\infty \frac{1}{a_i'}\left(\prod_{j=n}^{i-1}{c_j}\right)g_i,
\end{equation*}
for which we have $T_0^*=\overline{T_1}$.
Then we get the following summary of the Fredholm properties of our operators.

\begin{prop}\label{inversesbi}
With the above definitions we have

\begin{equation*}
\begin{aligned}
A_0T_0 &= I_{\wlltwoz}  \textrm{   and   } T_0A_0 = I_{\wlltwozp}\\
A_1T_1 &= I_{\wlltwoz}  \textrm{   and   } T_1A_1 = I_{\wlltwozp}\\
A_2T_2 &= I_{\wlltwoz}   - Proj_{Coker(A_2)}\textrm{   and   } T_2A_2 = I_{\wlltwozp}\\
\overline{T_0A_0} &= I_{\wlltwozp} \textrm{   and   } \overline{A_0T_0} = I_{\wlltwoz} \\
\overline{T_1A_1} &= I_{\wlltwozp} \textrm{   and   } \overline{A_1T_1} = I_{\wlltwoz} \\
\overline{T_2A_2} &= I_{\wlltwozp} \textrm{   and  } \overline{A_2T_2} = I_{\wlltwoz} - Proj_{Coker(\overline{A_2)}}.\\
\end{aligned}
\end{equation*}
In particular all six operators are unbounded Fredholm operators with index zero for $A_0$, $A_1$, $\overline{A_0}$, $\overline{A_1}$
and index minus one for $A_2$, $\overline{A_2}$.
\end{prop}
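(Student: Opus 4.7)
The approach is to organize the six claims into two groups: the "invertible group" $\{A_0, A_1, \overline{A_0}, \overline{A_1}\}$ and the "defective group" $\{A_2, \overline{A_2}\}$. Kernel triviality for all six is immediate from Proposition \ref{prelimsbiker} and the fact that the kernel generators $\Omega^\pm$ satisfy $\Omega^-_{\pm\infty} \ne 0$ and $\Omega^+_{\pm\infty} \ne 0$; thus any of the vanishing conditions $f_{\pm\infty} = 0$ kills them.

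For the invertible group I would perform two direct verifications and then obtain the other two identities by adjoints. The direct verification for $A_1 T_1 = T_1 A_1 = I$ proceeds exactly as in Proposition \ref{prelimsbi1}: variation of constants gives the general solution $f = T_1 g - \alpha\,\Omega^-$ of $Af = g$; the defining sum of $T_1 g$ vanishes as $n \to -\infty$ by the summability of $1/a_n$, and $\Omega^-_{-\infty} = 1$, so the unique solution lying in $\textrm{dom}(A_1)$ corresponds to $\alpha = 0$; injectivity of $A_1$ then upgrades this right inverse to a two-sided inverse. The analogous verification handles $A_0 T_0 = T_0 A_0 = I$, using the particular solution vanishing at $+\infty$ in place of $-\infty$. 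Combining Proposition \ref{Dstarbi} with the adjoint relations $T_1^* = \overline{T_0}$ and $T_0^* = \overline{T_1}$, one then obtains by adjunction
\begin{equation*}
\overline{T_0}\,\overline{A_0} = (A_1 T_1)^* = I \quad\textrm{and}\quad \overline{A_0}\,\overline{T_0} = (T_1 A_1)^* = I,
\end{equation*}
and similarly $\overline{T_1}\,\overline{A_1} = I$, $\overline{A_1}\,\overline{T_1} = I$.

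For the defective group no direct computation is needed: by Proposition \ref{Dstarbi} one has $A_2^* = \overline{A}$, and by definition $T_2 := \overline{T}^*$, so Proposition \ref{inversebiabar} yields
\begin{equation*}
T_2 A_2 = \overline{T}^*\,\overline{A}^* = (\overline{A}\,\overline{T})^* = I_{\wlltwozp},
\end{equation*}
\begin{equation*}
A_2 T_2 = \overline{A}^*\,\overline{T}^* = (\overline{T}\,\overline{A})^* = I_{\wlltwoz} - \textrm{Proj}_{\textrm{Ker}\,\overline{A}},
\end{equation*}
using self-adjointness of orthogonal projections. The last projection equals $\textrm{Proj}_{\textrm{Coker}(A_2)}$ because $\textrm{Coker}(A_2) = \textrm{Ker}(A_2^*) = \textrm{Ker}(\overline{A})$. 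The identical calculation with $A$ and $\overline{A}$ swapped handles $\overline{A_2}$ and $\overline{T_2} := T^*$.

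Finally, the Fredholm property for all six operators is automatic since each has a bounded parametrix and each defect $AT - I$ or $TA - I$ is either zero or a rank-one projection. The kernels are trivial in every case; the cokernels of the invertible group are trivial (because their adjoints lie within the same group and also have trivial kernels), while those of $A_2$ and $\overline{A_2}$ are one-dimensional, spanned by $\Omega^+$ and $\Omega^-$ respectively. The stated indices, zero for the first four operators and $-1$ for $A_2, \overline{A_2}$, follow immediately. The principal obstacle is not analytic but organizational --- pairing each adjoint identification from Proposition \ref{Dstarbi} with the correct parametrix relation and keeping track of the two weight sequences $a$ and $a'$ --- once that bookkeeping is in place, nothing beyond the methods already developed in Propositions \ref{prelimsbi1} and \ref{inversebiabar} is required.
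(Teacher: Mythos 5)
Your proposal is correct and follows essentially the route the paper intends: Proposition \ref{inversesbi} is stated there without proof, as a summary of Propositions \ref{prelimsbiker}, \ref{prelimsbi1}, \ref{inversebiabar} and \ref{Dstarbi}, and your argument assembles exactly those ingredients (variation-of-constants verification for $A_0$, $A_1$, adjoint transfer via $T_1^*=\overline{T_0}$ and $T_0^*=\overline{T_1}$, and the identifications $T_2=\overline{T}^*$, $\overline{T_2}=T^*$ for the defective pair $A_2$, $\overline{A_2}$). The only caveat is notational: you implicitly take $T_0$ and $\overline{T_1}$ to be the solution operators for $A$ and $\overline{A}$ vanishing at $+\infty$, respectively $-\infty$, which is the intended meaning and is what makes $A_0T_0=T_0A_0=I$ work, even though the formulas displayed for these two operators in the paper (with the weights $a'$, resp.\ $a$, and uninverted products of the $c_j$) appear to be misprints.
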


We conclude this section with a simple observation on functional-analytic properties of the parametrices.

\begin{prop}
Each of the 8 parametrix operators: $T$, $T_0$, $T_1$, $T_2$, $\overline{T}$, $\overline{T_0}$, $\overline{T_1}$, $\overline{T_2}$  is  a Hilbert-Schmidt operator.
\end{prop}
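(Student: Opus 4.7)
The plan is to mimic the unilateral Hilbert--Schmidt computation carried out above for $T$, exploiting the fact that every one of the eight parametrices decomposes either as a ``Volterra-type'' triangular summation operator, or as such an operator plus a rank-one correction, or as an adjoint of one of these. The analytic inputs are the same as before: the finite weighted sums $C$ and $C'$ together with the bound $|c_n|\le 1$.

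First I group the operators. The four pure Volterra parametrices are $T_0$, $T_1$, $\overline{T_0}$, $\overline{T_1}$, each a one-sided sum of the form
\[
T_1 g_n = \sum_{i=-\infty}^n \tfrac{1}{a_i}\Bigl(\prod_{j=i}^{n-1} c_j\Bigr)g_i
\]
or its mirror/conjugated analogue. Taking the orthonormal basis $\tilde e_i := \sqrt{a_i}\,\delta_i$ of $\wlltwoz$, a direct calculation gives $(T_1 \tilde e_i)_n = \frac{1}{\sqrt{a_i}}\prod_{j=i}^{n-1}c_j$ for $n\ge i$ and $0$ otherwise, so using $|c_j|\le 1$ one obtains $\|T_1 \tilde e_i\|^2_{\wlltwozp} \le C'/a_i$, and summing over $i$ yields $\|T_1\|_{HS}^2 \le CC'$. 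The identical bound works for $T_0$, $\overline{T_0}$, $\overline{T_1}$ after adjusting which space is the source.

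Next I handle $T$ and $\overline{T}$, which carry kernel corrections. From \ref{tdefbil} and \ref{tbardefbil} we have $T = T_1 - \alpha(\cdot)\,\Omega^-$ and $\overline{T} = \overline{T_0} - \beta(\cdot)\,\Omega^+$; the corrections are rank-one operators of the form $g \mapsto L(g)\,\Omega$, whose Hilbert--Schmidt norm equals the operator norm of $L$ times $\|\Omega\|$. The bounds $|\alpha(g)| \le K\sqrt{C}\,\|g\|$ and $|\beta(g)| \le K\sqrt{C'}\,\|g\|$ already appear inside the boundedness proofs of propositions \ref{prelimsbi1} and \ref{inversebiabar}, and $\|\Omega^\pm\|$ is finite by the kernel computations in proposition \ref{prelimsbiker}. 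Hence $T$ and $\overline{T}$ are Hilbert--Schmidt as sums of Hilbert--Schmidt operators, and $T_2 = \overline{T}^*$, $\overline{T_2} = T^*$ are Hilbert--Schmidt because the class is closed under taking adjoints.

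The main obstacle is simply bookkeeping across the eight operators: keeping track of which weighted space is the source and which is the target, whether $c_j$ appears conjugated, and whether the index runs up to or down from $n$. Beyond this, no new analytic idea beyond the unilateral $T$-estimate and the triviality that rank-one operators are Hilbert--Schmidt is required.
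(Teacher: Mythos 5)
Your argument is correct and is essentially the paper's own: the paper establishes the unilateral analogue by exactly this computation of $\sum_i\|Te_i\|^2$ over the (normalized) canonical basis, remarking that the remaining cases are similar, and the bilateral proposition is stated with the same proof intended. Your explicit reduction of the eight operators to the four triangular ``Volterra'' sums, the rank-one corrections controlled by the bounds on $\alpha(g)$, $\beta(g)$ and $\|\Omega^\pm\|$, and the adjoints $T_2=\overline{T}^*$, $\overline{T_2}=T^*$ via closure of the Hilbert--Schmidt class under taking adjoints simply makes explicit the bookkeeping the paper leaves implicit.
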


\section{Fourier Transform in Quantum Domains}
In this section we consider the Fourier Transform in the quantum domains, and get decomposition theorems for the Hilbert Space $\mathcal{H}$ and the operator $D$, defined in Section 3.
The following discussion covers both cases ${\mathbb S}=\N$  and ${\mathbb S}=\Z$ in a fairly uniform manner: there are only a few places where the difference between the unilateral and the bilateral cases needs to be covered separately. 
We will make an extensive use of the label operator defined as:

\begin{equation*}
Ke_k = ke_k,
\end{equation*}
where $\{e_k\}$, $k\in {\mathbb S}$ is the canonical basis for $\wlltwos$. The label operator lets us write different diagonal operators as its functions. For example two previously introduced operators can be expressed, with some notational abuse, as $W=W(K)$,  and $S=S(K)$, see \ref{Wdefref} and \ref{Sdefref},
with $W(k)=w_k$, and $S(k)=s_k=w_k^2-w_{k-1}^2$. Additionally, the elements of $\wlltwos$ will  also be written using the function notation i.e. $\{f_k\}=\{f(k)\}$. If $\{f(k)\}$ has a limit at $\pm\infty$ it is denoted by
$f(\pm\infty)$.

For the purpose of the following discussion we define
\begin{equation}\label{andef}
a^{(n)}(k) = S^{-1/2}(k)S^{-1/2}(k+n). \\
\end{equation}
Then one has the following lemma which is essentially a Fourier decomposition of the Hilbert space $\mathcal{H}$.

\begin{lem}\label{hilbert}
Let $a^{(n)} = \{a^{(n)}(k)\}$  be the  sequence of positive numbers defined above. The map $I:\bigoplus_{m=0}^\infty \ell_{{a}^{(m)}}^2(\mathbb{S}) \oplus\bigoplus_{n=1}^\infty \wlltwosc \rightarrow \mathcal{H}$ given by

\begin{equation*}
\bigoplus_{m=0}^\infty\{f_m(k)\}_{k\in{\mathbb S}}\oplus\bigoplus_{n=1}^\infty\{g_n(k)\}_{k\in{\mathbb S}}\overset{I}{\rightarrow} \sum_{m=0}^\infty U^mf_m(K) + \sum_{n=1}^\infty g_n(K)(U^*)^n
\end{equation*}
is well-defined and is an isomorphism of Hilbert spaces.

\end{lem}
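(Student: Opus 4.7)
The plan is to show that $I$ is a well-defined isometry with dense image. The isometry comes from an orthogonality computation for the ``monomials'' $U^m f_m(K)$ and $g_n(K)(U^*)^n$; surjectivity then follows from a normal-ordering argument.

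For the isometry, the key tools are the commutation relations $g(K)U = U g(K+1)$ and $g(K)(U^*)^n = (U^*)^n g(K-n)$ (and their iterates), together with cyclicity of trace. Applying them to the inner product $\langle U^m f_m(K), U^{m'} f_{m'}(K)\rangle_S$ reduces it to
\begin{equation*}
\textrm{tr}\bigl(S^{1/2}(K+m')\,f_{m'}(K)\,S^{1/2}(K)\,\overline{f_m(K)}\,(U^*)^m U^{m'}\bigr).
\end{equation*}
Since $U^*U = I$ in both the unilateral and bilateral cases, for $m = m'$ the factor $(U^*)^m U^m$ is the identity and the trace evaluates to $\sum_{k\in\mathbb{S}} S^{1/2}(k)S^{1/2}(k+m)|f_m(k)|^2 = \|f_m\|_{\ell^2_{a^{(m)}}}^2$, using the definition $1/a^{(m)}(k) = S^{1/2}(k)S^{1/2}(k+m)$. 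For $m \neq m'$ the factor $(U^*)^m U^{m'}$ is a nontrivial one-sided shift, so a diagonal operator multiplied by this shift has no diagonal matrix entries, and the trace vanishes. The same pattern handles the $g_n(K)(U^*)^n$ block, and any cross term $\langle U^m f_m(K), g_n(K)(U^*)^n\rangle_S$ reduces to a trace involving $(U^*)^{m+n}$ with $m+n \geq 1$, hence vanishes as well. These orthogonality relations imply immediately that $I$ is well-defined and isometric on the direct sum.

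For surjectivity, it suffices to show that the image of $I$ is dense in $\mathcal{H}$. The Cauchy--Schwarz bound $\|a\|_S^2 = |\textrm{tr}((S^{1/2}a)(S^{1/2}a^*))| \leq \|S^{1/2}a\|_2\,\|S^{1/2}a^*\|_2 \leq \|a\|^2\,\textrm{tr}(S)$ shows that operator-norm convergence implies $\mathcal{H}$-norm convergence, so polynomials in $U_W$ and $U_W^*$ (which are operator-norm dense in $C^*(W)$) are $\mathcal{H}$-norm dense in $\mathcal{H}$. Each such polynomial can be placed in normal-ordered form as a finite sum $\sum_{m=0}^M U^m f_m(K) + \sum_{n=1}^N g_n(K)(U^*)^n$ by iterating the commutation relations together with $U_W = UW(K)$ and $U_W^* = W(K)U^*$; the resulting $f_m$ and $g_n$ are finitely supported sequences, hence lie in every weighted $\ell^2$ space. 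Such a sum is manifestly in the image of $I$, so the image is dense.

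The main technical point is really the orthogonality computation: keeping track of the commutation relations carefully enough to produce the identification with the weighted $\ell^2$-norms, and verifying the vanishing of traces involving shifts. Once that is in hand, surjectivity is a clean consequence of the density of $*$-polynomials in $C^*(W)$ combined with the trace-class property of $S$ already observed in Section 3.
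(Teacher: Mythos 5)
Your overall strategy coincides with the paper's: the isometry is the same trace computation (the paper writes it out for the $g_n(K)(U^*)^n$ block and invokes symmetry; your explicit treatment of the cross terms, which vanish because a diagonal operator times a nonzero net power of the shift has zero diagonal, is exactly what is implicit there), and density of the range is again reduced to the trace-class property of $S$. You package the density step through operator-norm density of $*$-polynomials plus normal-ordering, whereas the paper works with the images of the canonical basis vectors $U^m\delta_l(K)$, $\delta_l(K)(U^*)^n$ and the convergence $\sum_{l\le L}\delta_l(K)\to I$ in $\mathcal{H}$; these are minor variants of the same argument.

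There is, however, one false statement in your density step. After normal-ordering a polynomial in $U_W=UW(K)$ and $U_W^*=W(K)U^*$, the coefficients $f_m$ and $g_n$ are \emph{not} finitely supported: already $U_W$ itself has $f_1(k)=w_k\ge\epsilon>0$, and in general the coefficients are bounded diagonal sequences built from shifted copies of $W$ (and, in the unilateral case, cut-offs coming from $UU^*=I-\mathrm{Proj}_{\,\mathbb{C}e_0}$). So membership in the weighted spaces cannot be deduced from finite support. The conclusion you need is still true, but for a reason your write-up never supplies: every \emph{bounded} sequence lies in $\ell_{a^{(m)}}^2(\mathbb{S})$ because
\begin{equation*}
\sum_{k\in\mathbb{S}}\frac{1}{a^{(m)}(k)}=\sum_{k\in\mathbb{S}}S^{1/2}(k)\,S^{1/2}(k+m)\le\Bigl(\sum_k S(k)\Bigr)^{1/2}\Bigl(\sum_k S(k+m)\Bigr)^{1/2}\le \mathrm{tr}(S)<\infty ,
\end{equation*}
by Cauchy--Schwarz; this is precisely the constant $C^{(m)}$ computed later in the paper. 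With that one-line substitution in place of the finite-support claim, your proof is complete and essentially equivalent to the paper's.
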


\begin{proof}
First we need to show that $I$ is an isometry.  We will only do this for the $g_n(K)$ terms as the calculation for the $f_n(K)$ terms is essentially identical.   We have
\begin{equation*}
\begin{aligned}
\left\|\sum_{n=1}^\infty g_n(K)(U^*)^n\right\|_{\mathcal{H}}^2 &= \textrm{tr}\left(S^{1/2}(K)\sum_{n=1}^\infty g_n(K)(U^*)^nS^{1/2}(K)\sum_{l=1}^\infty U^n\overline{g_l(K)}\right) \\
&=\textrm{tr}\left(S^{1/2}(K)S^{1/2}(K+n)\sum_{n=1}^\infty |g_n(K)|^2 \right)\\
&=\sum_{n=1}^\infty\sum_{k=0}^\infty\frac{1}{a_k^{(n)}}\left|g_n(k)\right|^2 
=\sum_{n=1}^\infty\left\|\{g_n(k)\}\right\|_{\ell_{a^{(n)}}^2}^2 = \|\{g_n(k)\}\|_{\bigoplus_{n=1}^\infty\ell_{a^{(n)}}^2}^2
\end{aligned}
\end{equation*}
and thus the norms are the same and $I$ is an isometry on its range. To show that $\textrm{Ran} \ I = \mathcal{H}$ we need to demonstrate that $\textrm{Ran} \ I$ is dense in $\mathcal{H}$.

First note that $C^*(W)$ is dense in $\mathcal{H}$ by construction.   Define $\delta_l(k)$ to be the following
function:
\begin{equation*}
\delta_l(k) = \left\{
\begin{array}{c}
1 \quad k=l \\
0 \quad k \neq l.
\end{array}\right. 
\end{equation*}
Then the (not normalized) canonical basis in $\ell_{{a}^{(m)}}^2(\mathbb{S})$ corresponds through the map $I$ to $U^m\delta_l(K)$ and  similarly the canonical basis in $\wlltwosc$ corresponds to $\delta_l(K)(U^*)^n$.   Note that $U^m\delta_l(K)$ and $\delta_l(K)(U^*)^n$ sit inside $C^*(W)$, so all that is required is to show they generate a dense set in $C^*(W)$ in the topology induced by $\mathcal{H}$ (they do not in the usual topology of $C^*(W)$). However this is clear since 

\begin{equation*}
\sum_{l\le L}\delta_l(K) \underset{L\to\infty}{\to} I \quad\textrm{in }\mathcal{H} 
\end{equation*}
because the operator $S$ is trace class.
It follows that $U,U^*$ are in $\textrm{Ran}\ I$, and thus  $\textrm{Ran}\ I$ is a dense subspace of $\mathcal{H}$.
\end{proof}

In what follows it will be convenient sometimes to write the Fourier series for $a\in\mathcal{H}$ in one of two ways:
\begin{equation*}
a=\sum_{m=0}^\infty U^mf_m(K) + \sum_{n=1}^\infty g_n(K)(U^*)^n=\sum_{m=1}^\infty U^mf_m(K) + \sum_{n=0}^\infty g_n(K)(U^*)^n
\end{equation*}
where we always set $f_0(k)=g_0(k)$.

We will now use the Fourier transform described in the above lemma to find a decomposition of $D$ in terms of the operators $A$ and $\overline{A}$ defined in the previous section. Recall that those operators depend on sequences of weights $a, a'$ and coefficients $c$ subject to conditions \ref{acconditions}. Since in the following the parameters vary, we will need appropriate decorations on $A$ and $\overline{A}$. 
To do that, in addition to sequences \ref{andef}, we introduce:
\begin{equation}\label{cndef}
c^{(n)}(k) := W(k)W^{-1}(k+n+1).
\end{equation}
Now we define the operators $A^{(n)}$  as follows:

\begin{equation*}
\begin{aligned}
&A^{(n)} : \textrm{dom}(A^{(n)}) \subset \ell_{a^{(n+1)}}^2(\mathbb{S}) \to \ell_{a^{(n)}}^2(\mathbb{S}) \\
&\textrm{where} \ \textrm{dom}(A^{(n)}) = \{f\in\ell_{a^{(n+1)}}^2(\mathbb{S})\ : \ \|Af\|_{\ell_{a^{(n)}}^2(\mathbb{S})}<\infty\} \\
&A^{(n)}f(k) = a^{(n)}(k)\left(f(k)-c^{(n)}(k-1)\,f(k-1)\right). \\
\end{aligned}
\end{equation*}
The corresponding formal adjoints $\overline{A}^{(n)}$ are defined in the same way as in the previous section i.e.

\begin{equation*}
\begin{aligned}
&\overline{A}^{(n)} : \textrm{dom}(\overline{A}^{(n)})\subset \ell_{{a}^{(n)}}^2(\mathbb{S}) \to \ell_{{a}^{(n+1)}}^2(\mathbb{S}) \\
&\textrm{where} \ \textrm{dom}(\overline{A}^{(n)}) = \{f\in\wlltwosc \ : \ \|\overline{A}f\|_{\ell_{{a}^{(n+1)}}^2(\mathbb{S})} <\infty\} \\
&\overline{A}^{(n)}f(k) = {a}^{(n+1)}(k)(f(k) - \overline{c}^{(n)}(k)f(k+1)).
\end{aligned}
\end{equation*}
Additionally we will need the following diagonal operator $W^{(m)}(K):=W(K+m)$ i.e. 
\begin{equation*}
W^{(m)}f(k):=W(k+m)f(k)
\end{equation*}
for $f\in \ell_{{a}^{(n)}}^2(\mathbb{S})$. Clearly $W^{(m)}$ is a bounded, invertible, self-adjoint operator with a bounded inverse.

Now we can state the main decomposition theorem. A minor difficulty here is that $D$ is not diagonal with respect to the Fourier decomposition of the Hilbert space but rather shifts the components by one.

\begin{theo}\label{opdecomp}
With the above notation the operator $D$ has the following decomposition: $Da= \di\sum_{m=1}^\infty U^mf'_m(K) + \sum_{n=0}^\infty g'_n(K)(U^*)^n$, where $a = \di\sum_{m=0}^\infty U^mf_m(K) + \sum_{n=1}^\infty g_n(K)(U^*)^n$ and $f_{m+1}'= -\overline{ A}^{(m)}W^{(m)}f_m$
and $g_{n-1}'= W^{(n-1)}A^{(n-1)}g_n$. We write symbolically:
\begin{equation*}
D \cong  \left( (-\overline{ A}^{(m)}W^{(m)})_{m=0}^\infty,
(W^{(n-1)}A^{(n-1)})_{n=1}^\infty \right).
\end{equation*}

\end{theo}

\begin{proof}
We compute the expression $Da = S^{-1/2}(K)\left[a,UW(K)\right]S^{-1/2}(K)$ using the Fourier decomposition: $a = \di\sum_{m=0}^\infty U^mf_m(K) + \sum_{n=1}^\infty g_n(K)(U^*)^n$.
We use the following commutation relation

\begin{equation*}
f(K)U = Uf(K+1) .
\end{equation*}
Then one obtains, setting in the unilateral case $W(-1)=f_n(-1)=g_n(-1)=0$,

\begin{equation*}
\begin{aligned}
&Da = S^{-1/2}(K)\left[a,UW(K)\right]S^{-1/2}(K) \\
&= \sum_{m=0}^\infty S^{-1/2}(K)\left(U^mf_m(K)UW(K) - UW(K)U^mf_m(K)\right)S^{-1/2}(K) \\
&+ \sum_{n=1}^\infty S^{-1/2}(K)\left(g_n(K)(U^*)^{n-1}W(K) - UW(K)g_n(K)(U^*)^n\right)S^{-1/2}(K). \\
\end{aligned}
\end{equation*}
The above expression is equal to
\begin{equation*}
\begin{aligned}
&-\sum_{m=0}^\infty U^{m+1}S^{-1/2}(K)S^{-1/2}(K+m+1)\left(W(K+m)f_m(K) - W(K)f_m(K+1)\right) \\
&+\sum_{n=1}^\infty S^{-1/2}(K)S^{-1/2}(K+n-1)\left(W(K+n-1)g_n(K)-W(K-1)g_n(K-1)\right)(U^*)^{n-1},\\
\end{aligned}
\end{equation*}
which can be written as:
\begin{equation*}
\begin{aligned}
&-\sum_{m=0}^\infty U^{m+1}a^{(m+1)}(K)\left(W(K+m)f_m(K) - \frac{W(K)}{W(K+m+1)}W(K+m+1)f_m(K+1)\right) \\
&+\sum_{n=1}^\infty W(K+n-1)a^{(n-1)}(K)\left(g_n(K)-\frac{W(K-1)}{W^(K+n-1)}g_n(K-1)\right)(U^*)^{n-1}.\\
\end{aligned}
\end{equation*}
This is equal to:
\begin{equation*}
\begin{aligned}
&-\sum_{m=0}^\infty U^{m+1} a^{(m+1)}(K)\left(W^{(m)}(K)f_m(K) - c^{(m)}(K)W^{(m)}(K+1)f_m(K+1)\right) \\
&+\sum_{n=1}^\infty W^{(n-1)}(K)a^{(n-1)}(K)\left(g_n(K)-c^{(n-1)}(K-1)g_n(K-1)\right)(U^*)^{n-1}.\\
\end{aligned}
\end{equation*}
Consequently
\begin{equation*}
\begin{aligned}
&Da=-\sum_{m=0}^\infty  U^{m+1}\overline{ A}^{(m)}W^{(m)}f_m(K)
+\sum_{n=1}^\infty   W^{(n-1)}A^{(n-1)}g_n(K) (U^*)^{n-1}.\\
\end{aligned}
\end{equation*}
Next we need to verify that the $a^{(n)}$, see (\ref{andef}), and the $c^{(n)}$, see (\ref{cndef}), satisfy the conditions \ref{acconditions}.
Note that since $w_k$ is an increasing sequence converging to $w^+ > 0$ one has  $|c^{(n)}(k)| = \left|\frac{w_k}{w_{k+n+1}}\right|\le 1$.

In the unilateral case, $\mathbb{S}=\N$, we compute
\begin{equation*}
K^{(n)}:=\prod_{k=0}^\infty \frac{1}{c^{(n)}(k)} = \frac{(w^+)^{n+1}}{w_0\cdots w_{n}}<\infty.
\end{equation*}
Next note that

\begin{equation*}
\begin{aligned}
C^{(n)}&:=\sum_{k=0}^\infty \frac{1}{a^{(n)}(k)}= \sum_{k=0}^\infty \sqrt{s_ks_{k+n}}
\leq\sqrt{\sum_{k=0}^\infty s_k}\sqrt{\sum_{k=0}^\infty s_{k+n}}\\
&=\sqrt{w^+}\sqrt{\sum_{k=n}^\infty s_{k}}<\infty,\\
\end{aligned}
\end{equation*}
with the constant $C^{(n)}$ going to zero as $n\to\infty$. 

In the  bilateral case ($k\in\Z$) we have
\begin{equation*}
K^{(n)}:=\prod_{k=-\infty}^\infty \frac{1}{c^{(n)}(k)} = \frac{(w^+)^{n+1}}{(w^-)^{n+1}}<\infty.
\end{equation*}
Next we estimate

\begin{equation*}
\begin{aligned}
C^{(n)}&:=\sum_{k=-\infty}^\infty \frac{1}{a^{(n)}(k)}= \sum_{k\leq -n/2}\sqrt{s_ks_{k+n}}+\sum_{k> -n/2}\sqrt{s_ks_{k+n}}\\
&\leq\sqrt{w^+-w^-}\,\sqrt{\sum_{k\leq -n/2}s_k}+
\sqrt{w^+-w^-}\,\sqrt{\sum_{k>n/2} s_{k}}<\infty,\\
\end{aligned}
\end{equation*}
and again the constant $C^{(n)}$ goes to zero as $n\to\infty$. 
\end{proof}
%To see that the domain of the closure of $D$ coincides with the domain of the decomposition:
%$\left( (-\overline{ A}^{(m)}W^{(m)})_{m=0}^\infty, (W^{(n-1)}A^{(n-1)})_{n=1}^\infty \right)$
%we use the following easily established result about Fredholm operators:
%\begin{lem}
%If $D$ is a densely defined Hilbert space operator such that there exist bounded operator $Q$ satisfying
%the following conditions:
%$Ker(Q)=Ran(D)^\perp$,
%for $x\in {Dom}(D)$ we have
%$QDx = x- Proj_{Ker(D)}x$, the set of $x$ such that $Qx\in {Dom}(D)$ is dense and for every such $x$ we have
%$DQx= x   - Proj_{Ran(D)^\perp} x$,
%then the domain of the closure of $D$ is $\textrm{Ran}\,Q\oplus\textrm{Ker}\,D$.
%\end{lem}

%The lemma immediately implies our statement about the domain of the closure of $D$, because 
%$\textrm{Pol}(W)$ is dense in $\mathcal{H}$, and because of the way we defined domains for
%$\overline{ A}^{(m)}$ and ${ A}^{(n)}$.

As we will see later on, the significance of $\lim\limits_{n\to\infty} C^{(n)} = 0$
is that it implies compactness of a parametrix of $D$, subject to APS boundary conditions.

We state here without a proof the analogous result for the formal adjoint $\overline{D}$ of $D$.
We define
\begin{equation*}
\overline{D}b := S^{-1/2}(K)[b,W(K)U^*]S^{-1/2}(K).
\end{equation*}
on the maximal domain, like the operator $D$.
We have the following decomposition.

\begin{theo}\label{opbardecomp}
With the above notation the operator $\overline{D}$ can be written as
\begin{equation*}
\overline{D} \cong  \left( (-W^{(m)} A^{(m)})_{m=0}^\infty,
(\overline{A}^{(n-1)}W^{(n-1)})_{n=1}^\infty \right).
\end{equation*}
\end{theo}

\section{Results}

%From now on we assume that the weights $w_k$ satisfy only Conditions 1 - 3, but not necessarily Condition 4 of Section 3 and that the operator
%$D$ and other related operators are defined on maximal domains using the Fourier decomposition.
We are now in a position to consider the proofs of the main results of this paper.
We rephrase here the statements of the theorems from Section 3  adding more detail.
The operator $D_N$ equals the unilateral operator $D$ with domain

\begin{equation*}
\textrm{dom}(D_N) = \left\{ a\in \textrm{Dom}(D) : \ r(a)\in \textrm{Ran} \ P_N \right\} .
\end{equation*}

We will now prove the first of the main results of this paper.

\begin{theo}\label{APSdisk}
The operator $D_N$ defined above is an unbounded Fredholm operator with index $ind(D_N) = N+1$. In fact, there is a bounded operator $Q_N$ such that $Ker(Q_N)=Coker(D_N)$,
$D_NQ_N= I   - Proj_{Coker(D_N)}$,  and   $Q_ND_N = I- Proj_{Ker(D_N)}$.
Moreover the parametrix $Q_N$ is a compact operator.
\end{theo}

\begin{proof}
All the hard work has been done.  It's now just a matter of piecing together appropriate results from the previous sections.  First we analyze the APS boundary conditions.   Let 
$a = \sum_{n=0}^\infty U^n f_n(K) + \sum_{n=1}^\infty g_n(K)(U^*)^n$ be in $\textrm{dom}(D_N)$.
Then the restriction  $r(a)$ from section 3 is well defined. We note that $r$ acts on $U$, $U^*$, and $f(K)$ in the following way

\begin{equation*}
\begin{aligned}
r(U) &= e^{i\f} \\
r(U^*) &= e^{-i\f} \\
r(f(K)) &= f(\infty)\cdot I := \lim_{k\to\infty} f(k)\cdot I .
\end{aligned}
\end{equation*}
The third equation holds because the difference $f(K)-f(\infty)\cdot I$ is a compact operator,  and $r$ vanishes on compact operators.   Consequently we see that $r$ acts on $a\in \textrm{Dom}(D)$ in the following way:

\begin{equation*}
r(a) = \sum_{m=0}^\infty e^{im\f}f_n(\infty) + \sum_{n=1}^\infty g_n(\infty) e^{-in\f}.
\end{equation*}
This means that for $r(a)$ to be in the range of $P_N$, where $\textrm{Ran }P_N = \underset{n\le N}{\textrm{span}}\{e^{in\f}\}$, one has the following: if $N\ge 0$, then $f_n(\infty) =0$ for $n>N$, and if $N < 0$, then $f_n(\infty) = 0$ for all $n$ and $g_n(\infty) = 0$ for $n<-N$.   Thus from Theorem (\ref{opdecomp}) and from proposition (\ref{prelimsbi2}) one can represent $D_N$ subject to the APS boundary conditions as follows

\begin{equation*}
D_N = \left\{
\begin{array}{cc}
\left( (-\overline{ A}^{(m)}W^{(m)})_{m=0}^N,(-\overline{ A_0}^{(m)}W^{(m)})_{m=N+1}^\infty,
(W^{(n-1)}A^{(n-1)})_{n=1}^\infty \right) & \textrm{for} \ N\ge 0 \\
\left( (-\overline{ A_0}^{(m)}W^{(m)})_{m=0}^\infty,
(W^{(n-1)}A_0^{(n-1)})_{n=1}^{-N-1},(W^{(n-1)}A^{(n-1)})_{n=-N}^\infty \right) & \textrm{for} \ N<0
\end{array}\right.
\end{equation*}

Also note from Theorem (\ref{opdecomp}),  proposition (\ref{prelimsbi2}) and the above analysis of the APS conditions, one can represent $D_N^*$  as follows

\begin{equation*}
{D_N}^* = \left\{
\begin{array}{cc}
\left( (-W^{(m)} A_0^{(m)})_{m=0}^N, (-W^{(m)} A^{(m)})_{m=N+1}^\infty,
(\overline{A_0}^{(n-1)}W^{(n-1)})_{n=1}^\infty \right) & \textrm{for} \ N \ge 0 \\
\left( (-W^{(m)} A^{(m)})_{m=0}^\infty,
(\overline{A}^{(n-1)}W^{(n-1)})_{n=1}^{-N-1}, 
(\overline{A_0}^{(n-1)}W^{(n-1)})_{n=-N}^\infty \right) & \textrm{for} \ N <0
\end{array}\right.
\end{equation*}

From these representations and from proposition (\ref{prelimsker}), one gets the following

\begin{equation*}
\textrm{dim Ker} D_N = \left\{
\begin{array}{cc}
N+1 & \textrm{ for } N\ge0 \\
0 & \textrm{ for } N<0
\end{array}\right.
\end{equation*}
and

\begin{equation*}
\textrm{dim Ker} {D_N}^* = \left\{
\begin{array}{cc}
0 & \textrm{ for } N\ge0 \\
-(N+1) & \textrm{ for } N<0
\end{array}\right.
\end{equation*}
and thus the index calculation follows.  To conclude that $D_N$ is a Fredholm operator we need to construct a parametrix. We build $Q_N$ in the following fashion:

\begin{equation*}
Q_N = \left\{
\begin{array}{cc}
\left( (-V^{(m)} \overline{T}^{(m)})_{m=0}^N, (-V^{(m)} \overline{T_0}^{(m)})_{m=N+1}^\infty,
({T}^{(n-1)}V^{(n-1)})_{n=1}^\infty \right) & \textrm{for} \ N \ge 0 \\
\left( (-V^{(m)} \overline{T_0}^{(m)})_{m=0}^\infty,
({T_0}^{(n-1)}V^{(n-1)})_{n=1}^{-N-1}, 
({T}^{(n-1)}V^{(n-1)})_{n=-N}^\infty \right) & \textrm{for} \ N <0
\end{array}\right.
\end{equation*}
where  $T^{(n)}$, $\overline{T}^{(n)}$, $T_0^{(n)}$, and $\overline{T_0}^{(n)}$ are, correspondingly, the parametrices for $A^{(n)}$, $\overline{A}^{(n)}$, $A_0^{(n)}$ and $\overline{A_0}^{(n)}$, as defined in Section 3, and 
\begin{equation*}
V^{(m)}:=\left(W^{(m)}\right)^{-1} .
\end{equation*}
From corollary (\ref{inveresun}) and propositions (\ref{inverseunker}) and (\ref{inverseun}), it follows that

\begin{equation*}
Q_N D_N = \left\{
\begin{array}{cc}
I - \textrm{Proj}_{\textrm{Ker} \ D_N} & \textrm{for} \ N\ge0 \\
I & \textrm{for} \ N<0
\end{array}\right.
\end{equation*}
and

\begin{equation*}
D_N Q_N = \left\{
\begin{array}{cc}
I & \textrm{for} \ N\ge0 \\
I - \textrm{Proj}_{\textrm{Ker} \ {D_N}^*} & \textrm{for} \ N<0 .
\end{array}\right.
\end{equation*}

From the construction, the kernel of each $T$ operator is the cokernel of the corresponding $A$ operator, which implies
that $Ker(Q_N)=Coker(D_N)$.

Finally all that remains is to show that $Q_N$ is a bounded, and in fact, a compact operator.  
Notice that ${T}^{(n-1)}V^{(n-1)}$ and $-V^{(m)} \overline{T_0}^{(m)}$ are compact operators (in fact Hilbert-Schmidt operators) with norms
that can be estimated as follows:
\begin{equation*}
||T^{(n-1)}V^{(n-1)}||\leq \frac{1}{w_0}\sqrt{C^{(n-1)}C^{(n)}}
\end{equation*}
and similarly 
\begin{equation*}
||V^{(m)} \overline{T_0}^{(m)}||\leq \frac{1}{w_0}\sqrt{C^{(m)}C^{(m+1)}}.
\end{equation*}
Since $C^{(n)}\to 0$ as $n\to\infty$,
it follows from the decomposition that $Q_N$ is compact as a uniform limit of compact operators.
Thus this completes the proof.
\end{proof}

Now we consider the non-commutative cylinder case.   The operator $D_{M,N}$ equals the bilateral operator $D$ with domain

\begin{equation*}
\textrm{dom}(D_{M,N}) = \left\{ a\in \textrm{Dom}(D) : \ r_+(a)\in \textrm{Ran} \ P_N^+ ,\ r_-(a)\in\textrm{Ran} \ P_M^-\right\} .
\end{equation*}

\begin{theo}
The operator $D_{M,N}$ above is an unbounded Fredholm operator with index $ind(D_{M,N}) = M+N+1$. In fact, there is a bounded operator $Q_{M,N}$ such that that $Ker(Q_{M,N})=Coker(D_{M,N})$,
$D_{M,N}Q_{M,N}= I   - Proj_{Coker(D_{M,N})}$,  and   $Q_{M,N}D_{M,N} = I- Proj_{Ker(D_{M,N})}$.
Moreover the parametrix $Q_{M,N}$ is a compact operator.
\end{theo}

\begin{proof}
The proof is analogous to the previous proof, however there are more cases to consider.  This is due to the way we treated
both the disk and the cylinder in complete parallel so far. A different Fourier transform of the Hilbert space could also have been considered
leading to an easier index calculation. However that would have made the corresponding decompositions of $D$ different and more complicated to analyze.

Let 
$a = \sum_{n=0}^\infty U^n f_n(K) + \sum_{n=1}^\infty g_n(K)(U^*)^n$ be in $\textrm{dom}(D_{M,N})$. Then we have
\begin{equation*}
r_\pm (a) = \sum_{m=0}^\infty e^{im\f}f_n({\pm\infty}) + \sum_{n=1}^\infty g_n({\pm\infty}) e^{-in\f}.
\end{equation*}
We need $r_+(a)$ to be in $\textrm{Ran }P^+_N = \underset{n\le N}{\textrm{span}}\{e^{in\f}\}$, 
and for $r_-(a)$ to be in $\textrm{Ran }P^-_M = \underset{-M\le n}{\textrm{span}}\{e^{in\f}\}$, so
one is led to consider the following six cases.
In each case we list the decomposition of the operator $D_{M,N}$ (in the first line), its adjoint ${D_{M,N}}^*$
(in the second line), and the parametrix $Q_{M,N}$ (in the third line). 

{\it \underline{Case 1} : $M+N\ge0$}

{\it Case 1(a) : $N\ge0$, $M>0$}

\begin{equation*}
\left((-\overline{A}^{(m)}W^{(m)})_{m=0}^N ,  (-\overline{A_0}^{(m)}W^{(m)})_{m=N+1}^\infty,
(W^{(n-1)}A^{(n-1)})_{n=1}^M , (W^{(n-1)}A_1^{(n-1)})_{n=M+1}^\infty\right)  
\end{equation*}

\begin{equation*}
\left((-W^{(m)}{A_2}^{(m)})_{m=0}^N ,  (-W^{(m)}{A_1}^{(m)})_{m=N+1}^\infty,
(\overline{A_2}^{(n-1)}W^{(n-1)})_{n=1}^M , (\overline{A_0}^{(n-1)}W^{(n-1)})_{n=M+1}^\infty\right)  
\end{equation*}

\begin{equation*}
\left((-V^{(m)}\overline{T}^{(m)})_{m=0}^N ,  (-V^{(m)}\overline{T_0}^{(m)})_{m=N+1}^\infty,
(T^{(n-1)}V^{(n-1)})_{n=1}^M , (T_1^{(n-1)}V^{(n-1)})_{n=M+1}^\infty\right)  
\end{equation*}

{\it Case 1(b) : $N<0$, $M>0$}

\begin{equation*}
\left((-\overline{A_0}^{(m)}W^{(m)})_{m=0}^\infty,  (W^{(n-1)}A_0^{(n-1)})_{n=1}^{-N-1},
(W^{(n-1)}A^{(n-1)})_{n=-N}^M , (W^{(n-1)}A_1^{(n-1)})_{n=M+1}^\infty\right)  
\end{equation*}

\begin{equation*}
\left((-W^{(m)}{A_1}^{(m)})_{m=0}^\infty,  (\overline{A_1}^{(n-1)}W^{(n-1)})_{n=1}^{-N-1},
(\overline{A_2}^{(n-1)}W^{(n-1)})_{n=-N}^M , (\overline{A_0}^{(n-1)}W^{(n-1)})_{n=M+1}^\infty\right)
\end{equation*}

\begin{equation*}
\left((-V^{(m)}\overline{T_0}^{(m)})_{m=0}^\infty,  (T_0^{(n-1)}V^{(n-1)})_{n=1}^{-N-1},
(T^{(n-1)}V^{(n-1)})_{n=-N}^M , (T_1^{(n-1)}V^{(n-1)})_{n=M+1}^\infty\right)  \end{equation*}
In the formulas above there is no second term when $N=-1$.

{\it Case 1(c) : $M\le0$, $N\ge0$}

\begin{equation*}
\left((-\overline{A_1}^{(m)}W^{(m)})_{m=0}^{-M-1},  (-\overline{A}^{(m)}W^{(m)})_{m=-M}^{N},
(-\overline{A_0}^{(m)}W^{(m)})_{m=N+1}^{\infty}, (W^{(n-1)}A_1^{(n-1)})_{n=1}^\infty\right)  
\end{equation*}

\begin{equation*}
\left((-W^{(m)}{A_0}^{(m)})_{m=0}^{-M-1},  (-W^{(m)}{A_2}^{(m)})_{m=-M}^{N},
(-W^{(m)}{A_1}^{(m)})_{m=N+1}^{\infty}, (\overline{A_0}^{(n-1)}W^{(n-1)})_{n=1}^\infty\right)  
\end{equation*}

\begin{equation*}
\left((-V^{(m)}\overline{T_1}^{(m)})_{m=0}^{-M-1},  (-V^{(m)}\overline{T}^{(m)})_{m=-M}^{N},
(-V^{(m)}\overline{T_0}^{(m)})_{m=N+1}^{\infty}, (T_1^{(n-1)}V^{(n-1)})_{n=1}^\infty\right) 
\end{equation*}
When $M=0$ in the above formulas we simply omit the first term.

{\it \underline{Case 2} : $M+N<0$}

\medskip

{\it Case 2(a) : $N<0$, $M\le0$}

\begin{equation*}
\left((-\overline{A_2}^{(m)}W^{(m)})_{m=0}^{-M-1} ,  (-\overline{A_0}^{(m)}W^{(m)})_{m=-M}^\infty,
(W^{(n-1)}A_2^{(n-1)})_{n=1}^{-N-1} , (W^{(n-1)}A_1^{(n-1)})_{n=-N}^\infty\right)  
\end{equation*}

\begin{equation*}
\left((-W^{(m)}{A}^{(m)})_{m=0}^{-M-1} ,  (-W^{(m)}{A_1}^{(m)})_{m=-M}^\infty,
(\overline{A}^{(n-1)}W^{(n-1)})_{n=1}^{-N-1} , (\overline{A_0}^{(n-1)}W^{(n-1)})_{n=-N}^\infty\right)  
\end{equation*}

\begin{equation*}
\left((-V^{(m)}\overline{T_2}^{(m)})_{m=0}^{-M-1} ,  (-V^{(m)}\overline{T_0}^{(m)})_{m=-M}^\infty,
(T_2^{(n-1)}V^{(n-1)})_{n=1}^{-N-1} , (T_1^{(n-1)}V^{(n-1)})_{n=-N}^\infty\right)  
\end{equation*}

In the formulas above there is no first term when $M=0$.

{\it Case 2(b) : $N<0$, $M>0$}

\begin{equation*}
\left((-\overline{A_0}^{(m)}W^{(m)})_{m=0}^\infty,  (W^{(n-1)}A_0^{(n-1)})_{n=1}^{M},
(W^{(n-1)}A_2^{(n-1)})_{n=M+1}^{-N-1} , (W^{(n-1)}A_1^{(n-1)})_{n=-N}^\infty\right)  
\end{equation*}

\begin{equation*}
\left((-W^{(m)}{A_1}^{(m)})_{m=0}^\infty,  (\overline{A_1}^{(n-1)}W^{(n-1)})_{n=1}^{M},
(\overline{A}^{(n-1)}W^{(n-1)})_{n=M+1}^{-N-1} , (\overline{A_0}^{(n-1)}W^{(n-1)})_{n=-N}^\infty\right)  
\end{equation*}

\begin{equation*}
\left((-\overline{V^{(m)} T_0}^{(m)})_{m=0}^\infty,  (T_0^{(n-1)}V^{(n-1)})_{n=1}^{M},
(T_2^{(n-1)}V^{(n-1)})_{n=M+1}^{-N-1} , (T_1^{(n-1)}V^{(n-1)})_{n=-N}^\infty\right)  
\end{equation*}

{\it Case 2(c) : $N\ge0$, $M<0$}

\begin{equation*}
\left((-\overline{A_1}^{(m)}W^{(m)})_{m=0}^{N-1},  (-\overline{A_2}^{(m)}W^{(m)})_{m=N}^{-M-1},
(-\overline{A_0}^{(m)}W^{(m)})_{m=-M}^{\infty}, (W^{(n-1)}A_1^{(n-1)})_{n=1}^\infty\right) 
\end{equation*}

\begin{equation*}
\left((-W^{(m)}{A_0}^{(m)})_{m=0}^{N-1},  (-W^{(m)}{A}^{(m)})_{m=N}^{-M-1},
(-W^{(m)}{A_1}^{(m)})_{m=-M}^{\infty}, (\overline{A_0}^{(n-1)}W^{(n-1)})_{n=1}^\infty\right) 
\end{equation*}

\begin{equation*}
\left((-V^{(m)}\overline{T_1}^{(m)})_{m=0}^{N-1},  (-V^{(m)}\overline{T_2}^{(m)})_{m=N}^{-M-1},
(-V^{(m)}\overline{T_0}^{(m)})_{m=-M}^{\infty}, (T_1^{(n-1)}V^{(n-1)})_{n=1}^\infty\right) 
\end{equation*}

In the formulas above there is again no first term when $N=0$.

From these representations and from proposition (\ref{prelimsbiker}), one gets the following

\begin{equation*}
\textrm{dim} \ \textrm{Ker}(D_{M,N}) = \left\{
\begin{array}{cc}
M+N+1 & \textrm{for} \ M+N\ge0 \\
0 & \textrm{for} \ M+N<0,
\end{array}\right.
\end{equation*}
and

\begin{equation*}
\textrm{dim} \ \textrm{Ker}({D_{M,N}}^*) = \left\{
\begin{array}{cc}
0 & \textrm{for} \ M+N\ge0 \\
-(M+N+1) & \textrm{for} \ M+N<0.
\end{array}\right.
\end{equation*}
Thus index calculation follows.   
Using the analysis done in section 4, we get the following two relations

\begin{equation*}
Q_{M,N}D_{M,N} = \left\{
\begin{array}{cc}
I - \textrm{Proj}_{\textrm{Ker} D_{M,N}} & \textrm{for} \ M+N \ge 0 \\
I & \textrm{for} \ M+N < 0,
\end{array}\right.
\end{equation*}
and

\begin{equation*}
D_{M,N}Q_{M,N} = \left\{
\begin{array}{cc}
I & \textrm{for} \ M+N \ge 0 \\
I - \textrm{Proj}_{\textrm{Ker} {D_{M,N}}^*} & \textrm{for} \ M+N < 0.
\end{array}\right. 
\end{equation*}

The relation $Ker(Q_{M,N})=Coker(D_{M,N})$ follows from the same property of the parametrix of each component of
$Q_{M,N}$.

The proof that $Q_{M,N}$ is compact is the same as in the unilateral case.
\end{proof}

\end{document}